\numberwithin{equation}{section}
\newtheorem{thm}{Theorem}[section]
\newtheorem{lem}[thm]{Lemma}
\newtheorem{cor}[thm]{Corollary}
\newtheorem{prop}[thm]{Proposition}
\newtheorem{defn}[thm]{Definition}
\theoremstyle{definition}
\newtheorem{rem}[thm]{Remark}
\theoremstyle{remark}
\newcommand{\R}{\mathbb{R}}
\newcommand{\N}{\mathbb{N}}
\newcommand{\argmin}{\arg\min}
\DeclareMathOperator{\dive}{div}
\patchcmd{\abstract}{\scshape\abstractname}{\textbf{\abstractname}}{}{}
\def\@makefnmark{} 
\begin{document}
\title[]{The $p-$Laplace \lq\lq Signature\rq\rq \ for Quasilinear Inverse Problems with Large Boundary Data}
\author[A. Corbo Esposito, L. Faella, G. Piscitelli, R. Prakash, A. Tamburrino]{
Antonio Corbo Esposito$^1$, Luisa Faella$^1$, Gianpaolo Piscitelli$^2$, Ravi Prakash$^3$, Antonello Tamburrino$^{1,4}$}\footnote{\\$^1$Dipartimento di Ingegneria Elettrica e dell'Informazione \lq\lq M. Scarano\rq\rq, Universit\`a degli Studi di Cassino e del Lazio Meridionale, Via G. Di Biasio n. 43, 03043 Cassino (FR), Italy.\\
$^2$Dipartimento di Matematica e Applicazioni \lq\lq R. Caccioppoli\rq\rq, Universit\`a degli Studi di Napoli Federico II, Via Cinthia n. 26, Complesso Universitario Monte Sant'Angelo, 81026 Napoli, Italy.\\
$^3$Departamento de Matem\'atica, Facultad de Ciencias F\'isicas y Matem\'aticas, Universidad de Concepci\'on, Avenida Esteban Iturra s/n, Bairro Universitario, Casilla 160 C, Concepci\'on, Chile.\\
$^4$Department of Electrical and Computer Engineering, Michigan State University, East Lansing, MI-48824, USA.\\
Email: corbo@unicas.it, l.faella@unicas.it, gianpaolo.piscitelli@unina.it {\it (corresponding author)}, rprakash@udec.cl, antonello.tamburrino@unicas.it.}
\maketitle

\begin{abstract}
This paper is inspired by an imaging problem encountered in the framework of Electrical Resistance Tomography involving two different materials, one or both of which are nonlinear. Tomography with nonlinear materials is in the early stages of developments, although breakthroughs are expected in the not-too-distant future.

We consider nonlinear constitutive relationships which, at a given point in the space, present a behaviour for large arguments that is described by monomials of order $p$ and $q$.

The original contribution this work makes is that the nonlinear problem can be approximated by a {weighted} $p-$Laplace problem. From the perspective of tomography, this is a significant result because it highlights the central role played by the $p-$Laplacian in inverse problems with nonlinear materials. Moreover, when $p=2$, this provides a powerful bridge to bring all the imaging methods and algorithms developed for linear materials into the arena of problems with nonlinear materials.

The main result of this work is that for \lq\lq large\rq\rq\ Dirichlet data in the presence of two materials of different order (i) one material can be replaced by either a perfect electric conductor or a perfect electric insulator and (ii) the other material can be replaced by a material giving rise to a  {weighted}  $p-$Laplace problem.



\noindent\textsc{\bf MSC 2020}: 35J62, 78A46, 35R30.

\noindent \textsc{\bf Key words and phrases}. Elliptic PDE, Quasilinear PDE, Nonlinear problems, Linear approximation, Asymptotic behaviour, Imaging, Electrical Resistance Tomography, Inverse problem.
\end{abstract}

\section{Introduction}
This paper treats a nonlinear imaging problem in Electrical Resistance Tomography (ERT). The aim is to retrieve the nonlinear electrical conductivity $\sigma$, from boundary data in stationary conditions (steady currents):
\begin{equation}\label{gproblem1}
\begin{cases}
\dive\Big(\sigma (x, |\nabla u(x)|) \nabla u (x)\Big) =0\ \text{in }\Omega\vspace{0.2cm}\\
u(x) =f(x)\qquad\qquad\qquad\quad\  \text{on }\partial\Omega,
\end{cases}
\end{equation}
where $f$ is the applied boundary potential, $u$ is the electric scalar potential and $\Omega\subset\R^n$, $n \geq 2,$ is an open bounded domain with Lipschitz boundary which represents the region occupied by the conducting material.

Retrieving the nonlinear electrical conductivity $\sigma$ from boundary data, i.e. from the $\Lambda_\sigma: f \mapsto \sigma \partial_n u$ operator, the so-called Dirichlet-to-Neumann (DtN) operator, is the nonlinear variant of the Calder\'on problem \cite{calderon1980inverse,calderon2006inverse}.

In the literature, there are very few contributions on the subject of imaging in the presence of nonlinear materials. As quoted in \cite{lam2020consistency}, {\it \lq\lq\ ... the mathematical analysis for inverse problems governed by nonlinear Maxwell's equations is still in the early stages of development.\rq\rq}. In this framework, we mention the work made for $p-$Laplace  {type}  nonlinearities \cite{Salo2012_IP,brander2015enclosure,brander2016calderon,brander2018superconductive,guo2016inverse,brander2018monotonicity,hauer2015p}, the work by Sun \cite{sun2004inverse,Sun_2005} for weak nonlinearities, the work by C\^arstea and Kar \cite{carstea2020recovery} which treated a nonlinear problem (linear plus a nonlinear term) and the work by Corbo Esposito et al. \cite{corboesposito2021monotonicity}. The latter treat a general nonlinearity within the framework of the Monotonicity Principle Method.

It can be expected that as new methods and algorithms become available, the demand for nondestructive evaluation and imaging of nonlinear materials will eventually and significantly rise.

From the physical standpoint, in steady current operations, the electric field is given by the electrical scalar potential as ${\bf E}(x)=-\nabla u(x)$, and the electrical current density ${\bf J}(x)$ depends on the electric field as follows:
\begin{equation} \label{J}
{\bf J}(x)=\sigma(x,\vert {\bf E}(x)\vert) {\bf E}(x)\quad\forall x\in\Omega.
\end{equation}
Equation \eqref{J} represents a constitutive relationship which is nonlinear, local, isotropic and memoryless.
\label{3-Applications}
 {From a general perspective, nonlinear electrical conductivities can be found in many materials like semiconducting and ceramic materials \cite{bueno2008sno2,boucher2018interest, lupo1996field}, superconductors \cite{seidel2015applied, krabbes2006high} and in biological tissues \cite{foster1989dielectric,corovic2013modeling}.
Other than steady currents, problem (\ref{gproblem1}) is common to other physical settings. Remaining in the framework of electromagnetism, both nonlinear electrostatic phenomena (see \cite{miga2011non} and references therein, see \cite{yarali20203d}) and nonlinear magnetostatic\footnote{$^1$In magnetostatics, it is possible to introduce a magnetic scalar potential for treating simply connected and source free regions.}{$^1$} phenomena (see \cite{1993ferr.book.....B}) can be modelled as in (\ref{gproblem1}).}

Specifically, we address ERT with a nonlinear electrical conductivity when the boundary data is \lq\lq large\rq\rq, \ in the sense that the boundary data is $\lambda f(x)$ where constant $\lambda$ approaches infinity. This contribution is companion of \cite{corboesposito2023thep0laplacesignature}, where ERT with nonlinear conductivities was treated in the \lq\lq small\rq\rq \ boundary data limit ($\lambda \to 0$) and it completes the \lq\lq philosophy\rq\rq \ of limiting problems. Indeed, when the electrical conductivity is unknown, no other limiting cases can be conceived, other than the small and large boundary data cases.

During the analysis of this class of problems, we recognize the limiting behaviour of the normalized solution defined as $v^\lambda = u^\lambda / \lambda$, being $u^\lambda$ the solution of \eqref{gproblem1} when the boundary data is $\lambda f$. To be more specific, we assume that the electrical conductivity $\sigma$ has a different  {behaviour for $E \to +\infty$} in region $A \subset \subset \Omega$ and $B=\Omega \backslash A$, i.e.
\begin{align}
\label{eq:asy1}
\sigma_B(x,E) \sim \beta (x)E^{p-2} \quad
\text{for a.e.}\  x \in B,\\
\label{eq:asy2}
\sigma_A(x,E) \sim \alpha (x)E^{q-2} \quad \text{for a.e.}\  x \in A,
\end{align}
 {where $\sigma_B$ and $\sigma_A$ are the restriction of $\sigma$ to $B$ and $A$, respectively, $E = \left| \mathbf{E} \right| = \left| \nabla u \right|$, }$\alpha(x)$ and $\beta(x)$ are proper functions and $p$ and $q$ are  {the asymptotic} growth exponents (see Figure \ref{fig_1_omega}).
\begin{figure}[h!]
\centering    \includegraphics[width=0.9\textwidth]{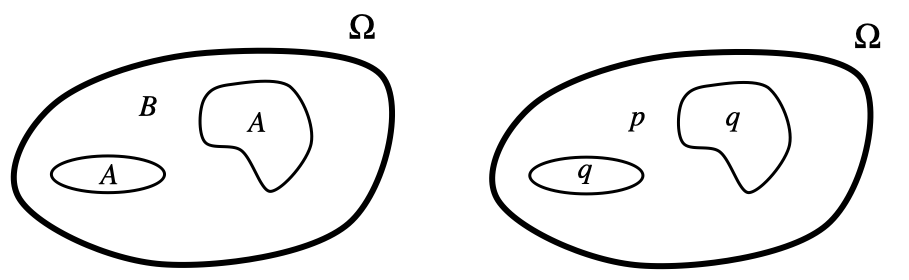}
\caption{The electrical conductivity has different  { asymptotic} growth exponents $q$ and $p$ in regions $A$ and $B$, respectively. Region $B$ surrounds $A$ and $\partial \Omega \subset \partial B$.}
    \label{fig_1_omega}
\end{figure}

In this contribution, we prove that the normalized solution $v^\lambda$ approaches a limit for $\lambda \to +\infty$ which is the solution of a  {weighted} $p-$Laplace problem in region $B$. Moreover, in solving this  {weighted} $p-$Laplace problem in region $B$, the boundary condition at the interface between $B$ and $A$ is either that of a Dirichlet boundary condition (constant scalar potential for $q>p$) or that of a Neumann boundary condition (vanishing normal component of the scalar potential for $q<p$). From the physical standpoint, region $A$ is seen as either a perfect electric conductor ($q>p$) or a perfect electric insulator ($q<p$).  {Finally, the case $p=q$ (that is the case when $A=\emptyset$) has been treated in
\cite{corboesposito2021monotonicity,MPMETHODS}}.

It is worth noting that understanding the behaviour of the solution in region $B$ is of paramount importance because the imaging system collects data, i.e. it measures a discrete approximation of $\Lambda_\sigma$, onto $\partial \Omega$, which is the outer boundary of  $B$. In other terms, the behaviour in $B$ determines the measured data.

These results make it possible to approximate a nonlinear problem with a  {weighted $p-$Laplace one in region $B$, together with a proper boundary condition on $\partial A$, corresponding to either a perfect electric conductor or a perfect electric insulator in $A$
(see Section \ref{subsec_hyp} for the underlying assumptions). They are quite interesting from both a mathematical and an engineering point of view because
they reduce the original quasilinear problem in $\Omega$ to the canonical problem of the weighted $p-$Laplace equation only in region $B$.}

 {From the perspective of inverse problems, these results allow to apply to \eqref{gproblem1} the results developed for the nonlinear $p-$Laplace variant of the Calder\'on problem, initially posed by Salo and Zhong \cite{Salo2012_IP}, and studied in \cite{brander2015enclosure,brander2016calderon,brander2018superconductive,guo2016inverse,brander2018monotonicity,hauer2015p}.
Moreover, the case $p=2$} is of paramount importance because the problem in region $B$ reduces to a linear one and, therefore, we have a powerful bridge to apply all the imaging methods and algorithms developed for linear materials to nonlinear ones.

\begin{figure}
\centering
\includegraphics[width=\textwidth]{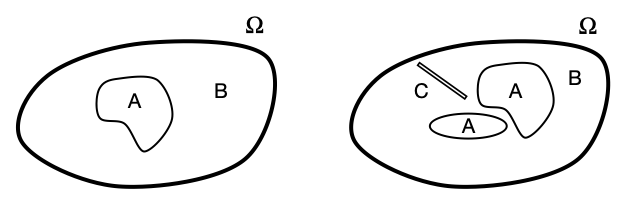}
\caption{Description of two possible applications. Left: inverse obstacle problem where the interface ($\partial A $) between two phases is unknown. $A$ and $B$ are the regions occupied by the inner material and the outer material, respectively. Right: nondestructive testing where  {regions A and B are known, while the position and shape of region $C$ (a crack) is unknown. Materials in regions $A$ and $B$ are known, as well.}}
    \label{fig_2_intro}
\end{figure}

\label{5a-IT}
 {For instance, a} classical  {class} for solving an inverse problem in the presence of linear materials  {is that of iterative methods, which is based on the (iterative) minimization} of a proper cost function \cite{tikhonov1977solutions,tikhonov1998nonlinear}. An overview of iterative methods can be found in several specialized textbooks \cite{bertero1998introduction,tarantola2005inverse,vogel2002computational,engl1996regularization}. Other than the popular Gauss-Newton method and its variants (see \cite{qi2000iteratively} for a review), we mention some relevant iterative approaches applied to inverse problems such as the Quadratic Born approximation \cite{pierri1997local}, Bayesian approaches \cite{premel2002eddy}, the Total Variation regularization \cite{RudinOsher1992nonlinear,pirani2008multi}, the Levenberg-Marquardt method for nonlinear inverse problems \cite{Hanke_1997}, the Level Set method \cite{dorn2000shape,harabetian1998regularization}, the Topological Derivative method \cite{Jackowska-Strumillo2002231,ammari2012stability,fernandez2019noniterative} and the Communication Theory approach \cite{tamburrino2000communications}.
 {Inverse problems in the presence of hollow domains, as it is the case for $B$, has been deeply investigated. Recent results on this subject includes approaches based on a parameterization of the obstacle \cite{colton1998inverse}, on the level sets method \cite{ameur2004level}, on the concept of shape derivative combined with that of level sets \cite{allaire2004structural}, on the topological derivative method \cite{bonnet2008inverse}, on the quasi-reversibility method \cite{bourgeois2010aquasi}, on a shape optimization method \cite{caubet2013shape}, and on an integral equation method \cite{kress2005nonlinear, cakoni2012integral}.} 

Noniterative methods are an excellent alternative to iterative ones, because they call for the computation of a proper function of the space, the indicator function, giving the shape of the interface between two different materials. The computation of the indicator function is much less expensive than the computation required by an iterative method, thus making noniterative methods suitable for real-time operations. Only a handful of noniterative methods are currently available. These include the Linear Sampling Method (LSM) by Colton and Kirsch \cite{Colton_1996}, which evolved into the Factorization Method (FM) proposed by Kirsch \cite{Kirsch_1998}. Ikehata proposed the Enclosure Method (EM) \cite{ikehata1999draw,Ikehata_2000} and Devaney applied MUSIC (MUltiple SIgnal
Classification), a well-known algorithm in signal processing, as an imaging method \cite{Devaney2000}. Finally, Tamburrino and Rubinacci proposed the Monotonicity Principle Method (MPM) \cite{Tamburrino_2002}.

 {Assumptions made in this work are presented in Section \ref{subsec_hyp}. They refer to a quite standard framework, apart from (A4) and (A4') which correspond to \eqref{eq:asy1} and \eqref{eq:asy2}. These two latter specific assumptions prescribe the pointwise convergence of the constitutive relationship for large electric fields $E$. Moreover, they are sharp, as shown by means of the counterexamples of Section \ref{counter_sec}. }

\label{8-arch}
The paper is organized as follows: in Section \ref{underlying} we present the ideas underpinning the work; in Section \ref{fram_sec} we set the notations and the problem, together with the required assumptions; in Section \ref{mean_sec} we study the fundamental inequality for large Dirichlet data; in Section \ref{large_sec} we discuss the limiting case for large Dirichlet data; in Section \ref{counter_sec} we provide the counterexamples proving that the specific assumptions are sharp 
and, finally, in Section \ref{Con_sec} we provide some conclusions.

\section{Underlying ideas and expected results}
\label{underlying}
In this section we present the main ideas underpinning this work. The key is the \lq\lq educated guess\rq\rq \ that when the boundary data is \lq\lq large\rq\rq, we expect the electric field $\mathbf{E}=-\nabla{u}$ to be large a.e. in $\Omega$ and, therefore, its behaviour is expected to be governed by the asymptotic behaviour of the constitutive relationship $\sigma=\sigma \left(x,E\right)$ in \eqref{gproblem1}. Let $A\subset\subset\Omega$ and  $B:=\Omega\setminus\overline A$, we assume that there exist two constants $q$ and $p$, and two functions $\beta$ and $\alpha$ which capture the behaviour of $\sigma$, as $E \to +\infty$, in $B$ and $A$, 
according to \eqref{eq:asy1} and \eqref{eq:asy2}, respectively.

Analysis of nonlinear problems is fascinating because of the wide variety of different cases. Some representative cases are shown in Figure \ref{fig_3_sigma}.
\begin{figure}
    \centering
    \includegraphics[width=0.325\textwidth]{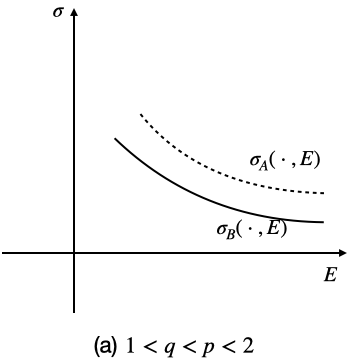}
    \includegraphics[width=0.325\textwidth]{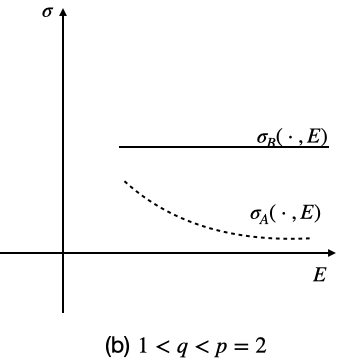}
    \includegraphics[width=0.325\textwidth]{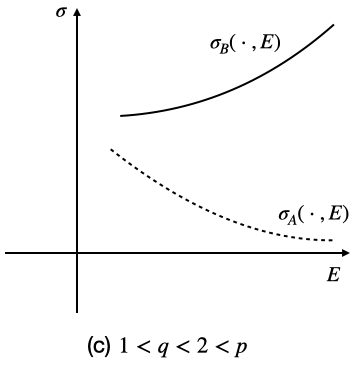}    
    \includegraphics[width=0.325\textwidth]{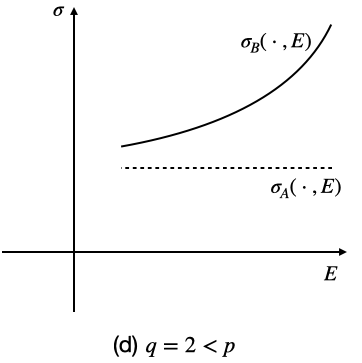}
    \includegraphics[width=0.325\textwidth]{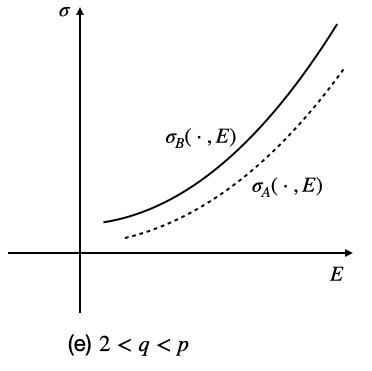}
    \caption{Relationship between the electrical conductivities for regions $A$ and $B$, when considering their asymptotic approximations: $\sigma_B(\cdot,E) \sim \beta(\cdot)E^{p-2}$ (solid line) and $\sigma_A(\cdot,E) \sim
    \alpha(\cdot) E^{q-2}$ (dashed line). Similar configurations are obtained when the order relation between $p$ and $q$ is reversed. Only the behaviour for large $E$ is shown.}
    \label{fig_3_sigma}
\end{figure}

When, for instance, $q<p$ it can be reasonably expected that either (i) region $A$ is a perfect electric insulator or (ii) region $B$ is a perfect electric conductor, because $\sigma_B$ would be dominant if compared to $\sigma_A$.
When $\partial \Omega$ is contained in $\partial B$, that is $A\subset\subset\Omega$, the ambiguity between (i) and (ii) can be resolved by looking at the boundary data. Specifically, if the boundary data is nonconstant, then region $B$ cannot be assimilated to a PEC because the potential $u$ would be constant and, therefore, noncompatible with the boundary data. Region $A$ can therefore reliably be assimilated to a PEI.

Moreover, the limiting problem where the conductor in region $A$ is replaced by a PEI can reliably be modelled by a  {weighted}  $p-$Laplace problem in region $B$, with a boundary condition on $\partial A$ given by a vanishing normal component of $\mathbf{J}$. In other words, $u \sim u_p$ in $B$, where $u_p$ is the solution of the  {weighted}  $p-$Laplace problem arising from the electrical conductivity $\beta(x) E^{p-2}$ in $B$.

The latter observation is also inspiring because it properly defines the concept of \lq\lq large\rq\rq \ boundary data and the limiting problem. Specifically, it is well known that the operator mapping the boundary data $f$ into the solution of a  {weighted}  $p-$Laplace problem is a homogeneous operator of degree 1, i.e. the solution corresponding to $\lambda f(x)$ is equal to $\lambda u_p(x)$, where $u_p$ is the solution corresponding to the boundary data $f$. Thus,  the term \lq\lq problem for large boundary data\rq\rq\ means \eqref{gproblem1} where the boundary data is $\lambda f$ and $\lambda \to +\infty$. 
Moreover, this suggests the need to study the convergent properties of the normalized solution $v^\lambda$, defined as the ratio $u^\lambda / \lambda$, where $u^\lambda$ is the solution of \eqref{gproblem1} corresponding to the Dirichlet data $\lambda f(x)$. Indeed, if $u^\lambda$ can be approximated by the solution of the  {weighted}  $p-$Laplace problem, then the normalized solution  {$v^\lambda$} converges in $B$, i.e. it is expected to be constant w.r.t. $\lambda$, as $\lambda$ approaches $+\infty$. We term this limit as $v^\infty$ and we expect it to be equal to $u_p$, i.e. the solution of the  {weighted}  $p-$Laplace problem with boundary data $f$.

From the formal point of view, when $q<p$, we will indeed prove that the normalized solution $v^\lambda$  {weakly converges to $v_\Omega$, which is termed $v_B$ in $B$ and $v_A$ in $A$. The function $v_B\in W^{1,p}(B)$} is the solution of a  {weighted}  $p-$Laplace problem in region $B$:
\begin{equation}\label{pproblem_B}
\begin{cases}
\dive\Big(\beta (x) |\nabla  {v_B}(x)|^{p-2}\nabla v_B (x)\Big) =0 & \text{in
 }B\vspace{0.2cm}\\
\beta (x) |\nabla  {v_B}(x)|^{p-2}\partial_\nu  {v_B}(x) =0 &  \text{on }\partial A\vspace{0.2cm}\\
 {v_B}(x) =f(x) & \text{on }\partial \Omega;\vspace{0.2cm}\\
\end{cases}
\end{equation}
the function  {$v_A\in W^{1,q}(A)$ is the} solution of a  {weighted} $q-$Laplace problem 
in region $A$:
\begin{equation}\label{qproblem_A}
\begin{cases}
\dive\Big(\alpha (x) |\nabla  {v_A}(x)|^{q-2}\nabla  {v_A} (x)\Big) =0 &\text{in }A\vspace{0.2cm}\\
 {v_A}(x) = {v_B}(x)  & \text{on }\partial A.
\end{cases}
\end{equation}

From the physical standpoint, problem \eqref{pproblem_B} corresponds to stationary currents where the electrical conductivity in $B$ is $\sigma(x,E)=\beta(x)E^{p-2}$, and region $A$ is replaced by a perfectly insulating material (PEI). Problem \eqref{qproblem_A} corresponds to stationary currents in region $A$, with boundary data given by the solution of \eqref{pproblem_B}, and electrical conductivity equal to $\sigma(x,E)=\alpha(x)E^{q-2}$.

 {It is worth noting that any solutions of problems \eqref{pproblem_B} and \eqref{qproblem_A} satisfy the minimum problems \eqref{Hinfty} and \eqref{Linfty}, respectively, described in Section \ref{large_sec}}.

Similarly, when $p<q$, it turns out that $v^\lambda$ converges to $ w\in W^{1,p}(\Omega)$ for $\lambda\to+\infty$, where  {$w$} is constant in each connected component of $A$ and is the solution of: 
\begin{equation}\label{pproblem_Bgrad}
\begin{cases}
\dive\Big(\beta (x) |\nabla  w(x)|^{p-2}\nabla  w (x)\Big) =0\ &\text{in }B\vspace{0.2cm}\\
 {|\nabla w|=0} &\text{a.e. in } A\\
\int_{\partial A}\sigma(x,|\nabla w(x)|)\partial_\nu w(x)dS=0\\
 w(x) =f(x)\qquad\qquad\qquad\qquad\quad  &\text{on }\partial \Omega\vspace{0.2cm}
\end{cases}
\end{equation}
In this case, from the physical standpoint,  region $A$ can be replaced by a Perfect Electric Conductor (PEC). 

 {It is worth noting that the solution of problem \eqref{pproblem_Bgrad} satisfies the minimum problem \eqref{N}, described in Section \ref{large_sec}}.

Finally, we highlight that in both cases ($q<p$ or $q>p$), the limiting problems in regions $B$ and $A$ can be solved in cascade (see Figure \ref{fig_4_decoupling}). Specifically, the problem for region $B$ can be solved first, then the problem for region $A$ can be solved from the knowledge of the trace on $\partial A$ of the solution evaluated in $B$.

\begin{figure}[h!]
\centering
\includegraphics[width=0.45\textwidth]{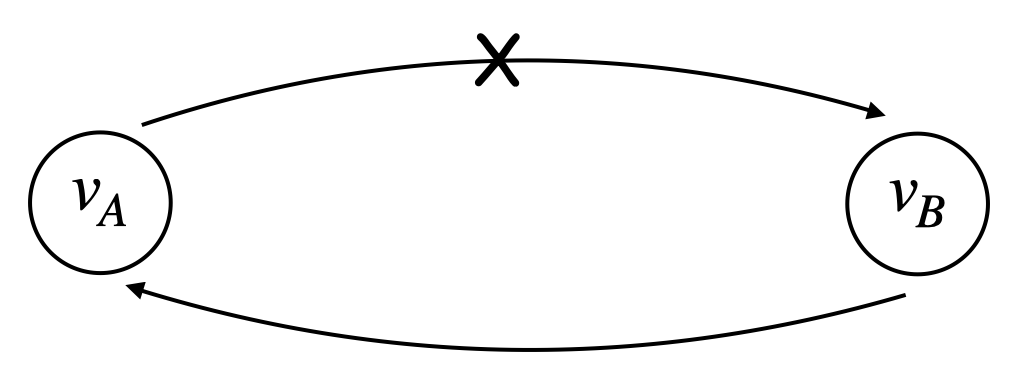}
\caption{The problem in $B$ can be solved independently of the problem in $A$. The problem in $A$ can be solved through the knowledge of the trace on $\partial A$ of the solution in $B$.}
\label{fig_4_decoupling}
\end{figure}

\section{Framework of the Problem}
\label{fram_sec}
\subsection{Notations}

Throughout this paper, $\Omega$ denotes the region occupied by the conducting materials. We assume that $\Omega\subset\R^n$, $n\geq 2$, is a bounded domain (i.e. an open and connected set) with Lipschitz boundary and $A\subset\subset\Omega$ is an open bounded set with Lipschitz boundary and a finite number of connected components, such that $B:=\Omega\setminus\overline A$ is still a domain.
Hereafter we consider  {$1<p,q<+\infty$, $p\neq q$}. Region $B$ is occupied by a conducting material with an  {asymptotic} $p-$growth$^*$\footnote{$^*$  {The concept of  {asymptotic} $p-$growth and  {asymptotic} $q-$growth is formalized in assumption (A4) and (A4'), respectively.} 
} (for large electric fields) whereas region $A$ is occupied by the material with an  {asymptotic} $q-$growth (for large electric fields), see Figure \ref{fig_1_omega}.

We denote 
by $dS$ the $(n-1)-$dimensional Hausdorff measure. Moreover, we set
\[
L^\infty_+(\Omega):=\{\theta\in L^\infty(\Omega)\ |\ \theta\geq c_0\ \text{a.e. in}\ \Omega, \ \text{for a positive constant}\ c_0\}.
\]
Furthermore,  {the Sobolev space} $W^{1,p}_0(\Omega)$ is the closure set of $C_0^1(\Omega)$ with respect to the $W^{1,p}-$norm.

 The applied boundary voltage $f$ belongs to the abstract trace space $B_{p}^{1-\frac 1p,p}(\partial\Omega)$, which is a Besov space (refer to \cite{JERISON1995161,leoni17}),  {equipped with the following norm:
\[
||u||_{B^{1-\frac 1p,p}(\partial\Omega)}=||u||_{L^p(\partial\Omega)}+|u|_{B^{1-\frac 1p,p}(\partial\Omega)}<+\infty,
\]
where for the precise definition of $|\cdot|_{B^{1-\frac 1p,p}(\partial\Omega)}$ we refer to \cite[Def.18.32]{leoni17}.}

 {
For the sake of brevity, we denote this space by $X^p(\partial \Omega)$ and we point out that its elements can be identified as the functions in $W^{1,p}(\Omega)$, modulo the equivalence relation $f\in [g]_{X^p(\partial \Omega)}$ if and only if $f-g\in W^{1,p}_0(\Omega)$.}

Finally, we denote by $X^p_\diamond (\partial \Omega)$ the set of elements in $X^p(\partial \Omega)$ with zero average on $\partial\Omega$ with respect to the measure $dS$.

\subsection{The Scalar Potential and Dirichlet Energy}

In terms of the electric scalar potential, that is ${\bf E}(x)=-\nabla u(x)$, the nonlinear Ohm's law ${\bf J}(x)=\sigma(x,E(x)){\bf E}(x)$ is
 \begin{equation*}
 {\bf J} (x)=- \sigma (x, |\nabla u(x)|)\nabla u(x),
 \end{equation*}
where $\sigma$ is the electrical conductivity, ${\bf E}$ is the electric field, and ${\bf J}$ is the electric current density.

The electric scalar potential $u$ 
solves the steady current problem:
 \begin{equation}\label{gproblem}
\begin{cases}
\dive\Big(\sigma (x, |\nabla u(x)|) \nabla u (x)\Big) =0\ \text{in }\Omega\vspace{0.2cm}\\
u(x) =f(x)\qquad\qquad\qquad\quad\  \text{on }\partial\Omega,
\end{cases}
\end{equation}
where $f\in X_\diamond^p(\partial \Omega)$. Problem \eqref{gproblem} is meant in the weak sense, that is
\begin{equation*}
\int_{\Omega }\sigma \left( x,| \nabla u(x) |\right) \nabla u (x) \cdot\nabla \varphi (x)\ \text{d}x=0\quad\forall\varphi\in C_c^\infty(\Omega).
\end{equation*}

We observe that the solution $u$ restricted to $B$ belongs to $W^{1,p}(B)$, whereas $u$ restricted to $A$ belongs to $W^{1,q}(A)$. Therefore, the solution $u$ as a whole is an element of the larger functional space $W^{1,p}(\Omega)\cup W^{1,q}(\Omega)$. Furthermore, (i) if $p\leq q$ then $W^{1,p}(\Omega)\cup W^{1,q}(\Omega)=W^{1,p}(\Omega)$, and (ii) if $p\geq q$ then $W^{1,p}(\Omega)\cup W^{1,q}(\Omega)=W^{1,q}(\Omega)$.

The solution $u$ satisfies the boundary condition in the sense that $u-f\in W_0^{1,p}(\Omega)\cup W_0^{1,q}(\Omega)$ and we write $u|_{\partial\Omega}=f$.

Moreover, the solution $u$ is variationally characterized as
\begin{equation}\label{gminimum}
\argmin\left\{ \mathbb{F}_\sigma\left( u\right)\ :\ u\in W^{1,p}(\Omega)\cup W^{1,q}(\Omega), \ u|_{\partial\Omega}=f\right\}.
\end{equation}

In (\ref{gminimum}), the functional $\mathbb{F}_\sigma\left( u\right)$ is the Dirichlet Energy
\begin{equation*}
\mathbb{F}_\sigma
\left(  u \right) = \int_{B} Q_B (x,|\nabla u(x)|)\ \text{d}x+ \int_A Q_A (x,|\nabla u(x)|)\ \text{d}x
\end{equation*} 
where $Q_B$ and $Q_A$ are the Dirichlet Energy density in $B$ and in $A$, respectively:
\begin{align*}
& Q_{B} \left( x,E\right)  :=\int_{0}^{E} \sigma_B\left( x,\xi \right)\xi  \text{d}\xi\quad \text{for a.e.}\ x\in B\ \text{and}\ \forall E\geq0,\\
& Q_{A}\left( x,E\right)  :=\int_{0}^{E} \sigma_A\left( x,\xi \right)\xi  \text{d}\xi\quad \text{for a.e.}\ x\in A\ \text{and}\ \forall E\geq 0,
\end{align*}
and $\sigma_B$ and $\sigma_A$ are the restriction of the electrical conductivity $\sigma$ in $B$ and $A$.

\subsection{Requirements on the Dirichlet Energy densities}\label{subsec_hyp}
In this Section, we provide the assumptions on the Dirichlet Energy densities $Q_B$ and $Q_A$, to guarantee the well-posedness of the problem and to prove the main convergence results of this paper.

For each individual result, we will make use of a  {suitable subset} of  assumptions, among those listed in the following.

Firstly, we recall the definition of the Carathéodory function.
\begin{defn}
$Q:\Omega\times[0,+\infty)\to\R$ is a Carathéodory function iff:
\begin{itemize}
\item $x\in\overline\Omega\mapsto Q(x,E)$ is measurable for every $E\in[0,+\infty)$,
\item $E\in [0,+\infty)\mapsto Q(x, E)$ is continuous for almost every $x\in\Omega$.
\end{itemize}
\end{defn}
The existence and the uniqueness of the solution of \eqref{gminimum} are guaranteed by the following assumptions on $Q_B$ and $Q_A$.
\begin{itemize}
\item[\textbf{({A1})}]  $Q_B$ and $Q_A$ are Carathéodory functions;
\item[\textbf{({A2})}] $E\mapsto Q_B(x,E)$ and $E\to Q_A(x,E)$ are positive, $C^1$,  strictly convex function such that $Q_B(x,0)=0$ for a.e. $x\in B$, and $Q_A(x,0)=0$ for a.e. $x\in A$.
\end{itemize}
Since $Q_B$ and $Q_A$ are positive, convex and vanishing in $0$, they are both increasing functions in $E$.
\begin{itemize}
\item[{\bf ({A3})}] There exist  {three} positive constants $\underline Q\leq \overline Q$ and $E_0$, such that: 
\[
\begin{split}
(i)\ \underline Q \left[\left(\frac{E}{E_0}\right)^p- 1\right]\leq Q_B(x, E)\leq \overline Q \left[\left(\frac{E}{E_0}\right)^p+ 1\right]\ \ \text{for a.e.} \  x\in B \ \text{and}\ \forall\  E\ge 0,\\
(ii)\ \underline Q \left[\left(\frac {E}{E_0}\right)^q- 1\right]\leq Q_A(x, E)\leq \overline Q \left[\left(\frac{E}{E_0}\right)^q+ 1\right]\ \ \text{for a.e.} \  x\in A \ \text{and}\ \forall\  E\ge 0.
\end{split}
\]
\end{itemize}


 {Assumptions (A1), (A2) and (A3) assure the existence of the solution of problem \eqref{gminimum}, see Theorem \ref{existence_thm} below}.

Assumption ({A3}) is well-known in literature (see e.g. assumptions (H4) in \cite{corboesposito2021monotonicity} and (A2) in \cite{lam2020consistency}).

In order to obtain the convergence results for large Dirichlet boundary data, we  {introduce the concept of  {asymptotic} $p-$growth and  {asymptotic} $q-$growth, as in the following assumption.}
\begin{itemize}
\item[{\bf ({A4})}] There exists a function $\beta$ such that:
\begin{equation*}
\begin{split}
\lim_{E\to +\infty} \frac{Q_B (x,E)}{E^p}=\beta(x)\quad \text{for a.e.}\  x\in B.
\end{split}
\end{equation*}
\end{itemize}
We observe that $\beta\in L^\infty_+(B)$, by (A3).  {This is a key assumption for this contribution: it guarantees the convergence of the solution in region $B$.} In Section \ref{counter_sec}, we provide a counterexample to show that assumption (A4) is sharp. 

\begin{itemize}
\item[{\bf ({A4}')}] There exists a function $\alpha$ such that:
\begin{equation*}
\begin{split}
\lim_{E\to +\infty} \frac{Q_A (x,E)}{E^q}=\alpha(x)\quad \text{for a.e.}\  x\in A.
\end{split}
\end{equation*}
\end{itemize}
We observe that $\alpha\in L^\infty_+(A)$ by ({{A3}}).  {This assumption is needed to guarantee the convergence of the solution in region $A$ for $p>q$. Assumption (A4') is sharp.}

\subsection{Connection among $\sigma$, ${\bf J}$ and $Q$}

This paper is focused on the properties of the Dirichlet Energy density $Q$, while, in physics and engineering the electrical conductivity $\sigma$ is of greater interest. From this perspective, assumptions ( {Ax}) are able to include a wide class of electrical conductivities (see Figure \ref{fig_5_assumptions}). 
In other words, the ( {Ax})s are not restrictive in practical applications.

\begin{figure}[ht]
	\centering
	\includegraphics[width=0.42\textwidth]{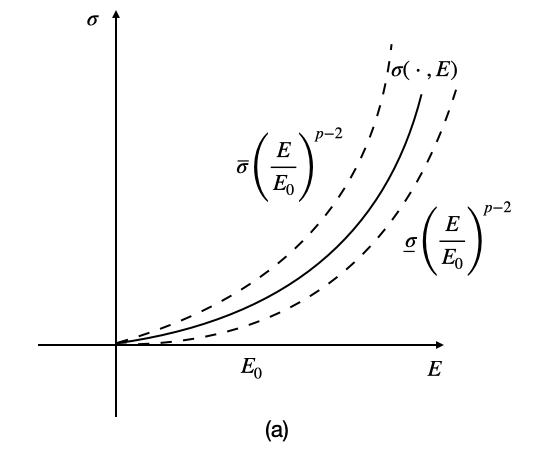}
	\includegraphics[width=0.42\textwidth]{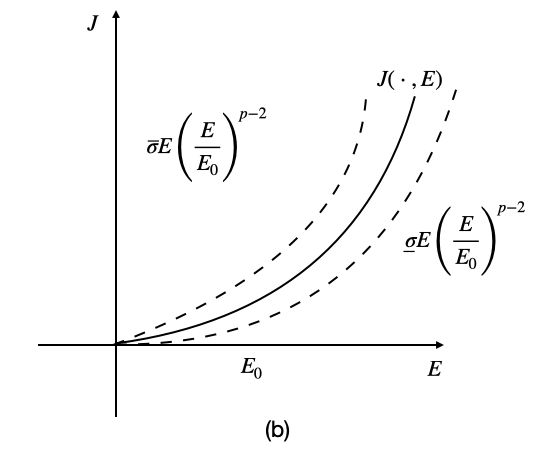}
	\includegraphics[width=0.42\textwidth]{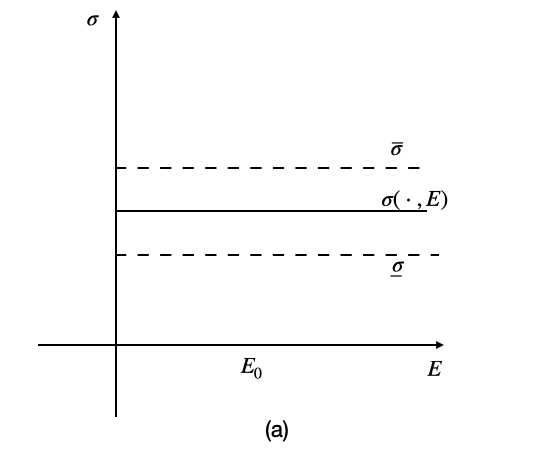}
	\includegraphics[width=0.42\textwidth]{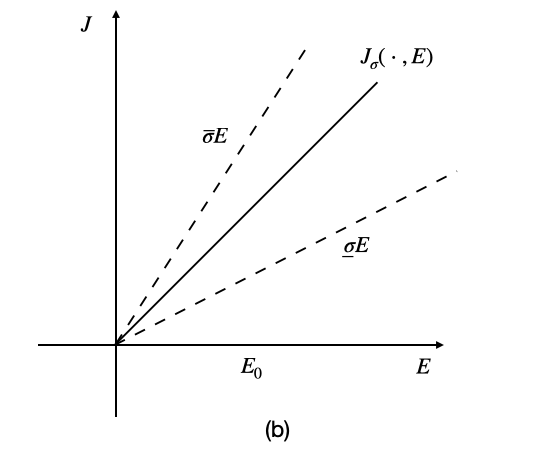}
	\includegraphics[width=0.42\textwidth]{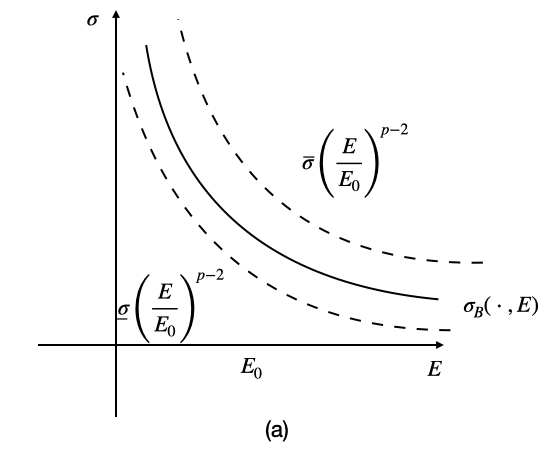}
	\includegraphics[width=0.42\textwidth]{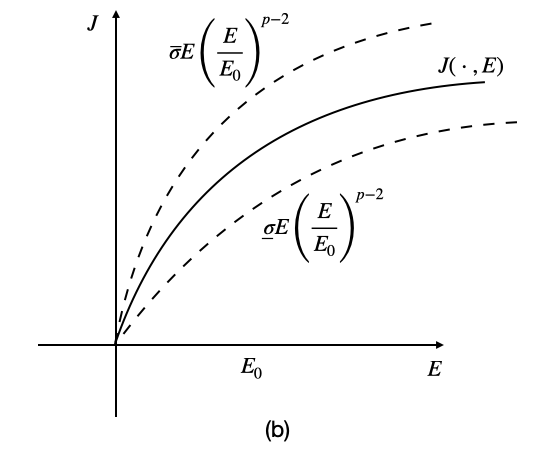}
	\caption{Example of a constitutive relationship when $p>2$, $p=2$ and $p<2$: (a) in terms of electrical conductivity and (b) in terms of current density.  {Dashed} lines correspond to the upper and lower constraints to either $\sigma$ or ${J}_\sigma$. $\underline \sigma$ $(\overline \sigma)$ is related to $\underline Q$ $(\overline Q)$, $p$ and $E_0$.}
	\label{fig_5_assumptions}
\end{figure}

There is a close connection between $\sigma$, $J_\sigma$ and $Q_\sigma$. Indeed, we observe that 
\begin{equation*}
\begin{split}
Q_B
\left( x,E \right)  =\int_{0}^{E}{J_B} ({ x, \xi}) \ \text{d}{ \xi}\quad\text{for a.e.} \ x\in B\ \text{and}\ \forall E> 0,\\
Q_A
\left( x,E \right)  =\int_{0}^{E}{J_A} ({ x, \xi}) \ \text{d}{ \xi}\quad\text{for a.e.} \ x\in A\ \text{and}\ \forall E> 0,
\end{split}
\end{equation*}
where $J_B$ and $J_A$ is the magnitude of the current density in regions $B$ and $A$, respectively:
\begin{equation}\label{connsJQ}
    \begin{split}
J_B (x, E)&=\partial_E Q_B(x,E)=\sigma_B(x, E)E\quad \text{for a.e.} \ x \in B\ \text{and}\ \forall E>0.\\
J_A (x, E)&=\partial_E Q_A(x,E)=\sigma_A(x, E)E\quad \text{for a.e.} \ x \in A\ \text{and}\ \forall E>0.
    \end{split}
\end{equation}

The electrical conductivity $\sigma(x,E)$ is the secant to the graph of the function $J_\sigma(x,E(x))$ and $Q_\sigma (x, E(x))$ is the area of the sub-graph of $J_\sigma(x, E(x))$. For a geometric interpretation of the connections between $\sigma$, $J_\sigma$ and $Q_\sigma$, see Figure \ref{fig_6_JE}.

\begin{figure}[!ht]
	\centering
	\includegraphics[width=0.45\textwidth]{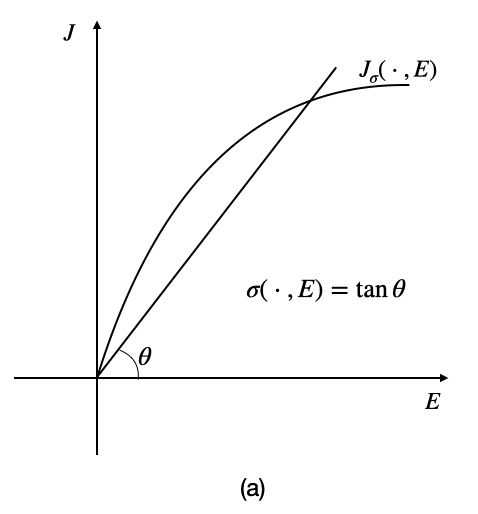}
	\includegraphics[width=0.45\textwidth]{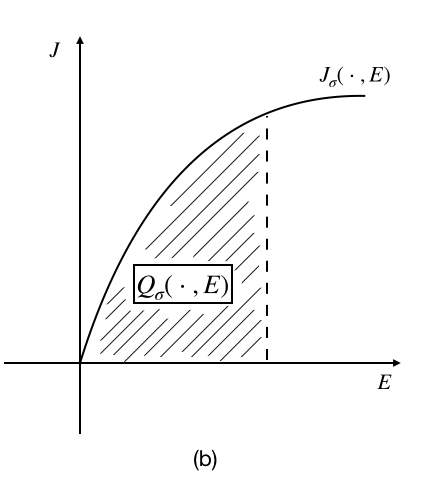}
	\caption{For any given spatial point in the region $\Omega$, (a) the electrical conductivity $\sigma(\cdot,E)$ is the secant line to the graph of the function $J_\sigma(\cdot,E)$;
	(b) $Q_\sigma (\cdot, E)$ is the area of the sub-graph of $J_\sigma(\cdot, E)$.}
	\label{fig_6_JE}
\end{figure}

\subsection{Existence and uniqueness of the solutions}
The proof of the existence and uniqueness of the solution for problem \eqref{gminimum}, relies on standard methods of the Calculus of Variations, when the Dirichlet Energy density presents the same growth  {bounds (described in (A3))} in any point of the domain $\Omega$. The case treated in this work is nonstandard, because the Dirichlet Energy density presents different growth  {bounds} in $B$ and $A$ and, hence, we provide a proof in the following.

\begin{thm}\label{existence_thm}
Let  {$1<p, q<+\infty$, $p\neq q$}, $f\in X_\diamond^p(\partial \Omega)$. If (A1), ({A2}), (A3) hold, then there exists a unique solution of problem \eqref{gminimum}.
\end{thm}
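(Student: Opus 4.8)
The plan is to use the direct method of the calculus of variations, the only nonstandard feature being the mismatch of growth exponents between $A$ and $B$. First I would fix the ambient space: setting $m:=\min\{p,q\}$, since $\Omega$ is bounded one has $W^{1,p}(\Omega)\cup W^{1,q}(\Omega)=W^{1,m}(\Omega)$, so I regard $\mathbb F_\sigma$ as a functional on the affine set $\mathcal A:=\{u\in W^{1,m}(\Omega):u|_{\partial\Omega}=f\}$, extended by $+\infty$ off its effective domain $\{u\in\mathcal A:\nabla u|_B\in L^p(B),\ \nabla u|_A\in L^q(A)\}$. I would first check this effective domain is nonempty: for $q<p$ any extension $u_f\in W^{1,p}(\Omega)$ of $f$ works, because H\"older's inequality on the bounded set $A$ gives $\int_A|\nabla u_f|^q\,dx<\infty$; for $q>p$ I would instead glue such a $u_f$ to a constant inside a neighbourhood of $\overline A$ via a cut-off $\phi\in C_c^\infty(\Omega)$ equal to $1$ near $\overline A$ and vanishing near $\partial\Omega$, so that $\nabla u_f\equiv 0$ on $A$ and the energy is finite while the trace is unchanged. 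Since $\mathcal A$ is a translate of $W^{1,m}_0(\Omega)$, it is convex and strongly closed, hence weakly closed.

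Next I would establish coercivity. The lower bounds in (A3) give $\mathbb F_\sigma(u)\ge \underline Q E_0^{-p}\int_B|\nabla u|^p\,dx+\underline Q E_0^{-q}\int_A|\nabla u|^q\,dx-\underline Q(|A|+|B|)$, so a minimizing sequence $(u_k)$ has $\|\nabla u_k\|_{L^p(B)}$ and $\|\nabla u_k\|_{L^q(A)}$ bounded (and the infimum is finite, being bounded below here and above by the competitor just constructed). On the subregion carrying the larger exponent I would descend to the smaller one by H\"older (e.g. $\int_A|\nabla u_k|^p\le|A|^{1-p/q}(\int_A|\nabla u_k|^q)^{p/q}$ when $q>p$), obtaining a bound on $\|\nabla u_k\|_{L^m(\Omega)}$; writing $u_k=u_f+w_k$ with $w_k\in W^{1,m}_0(\Omega)$ and invoking Poincar\'e then bounds $(u_k)$ in $W^{1,m}(\Omega)$. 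Extracting a subsequence, $u_k\rightharpoonup u$ in $W^{1,m}(\Omega)$; since $1<p,q<\infty$ the spaces are reflexive, so the separate bounds force $u_k|_B\rightharpoonup u|_B$ in $W^{1,p}(B)$ and $u_k|_A\rightharpoonup u|_A$ in $W^{1,q}(A)$ (uniqueness of weak limits), whence $u$ lies in the effective domain and in $\mathcal A$ by weak closedness.

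For lower semicontinuity I would use convexity. Since $E\mapsto Q_B(x,E)$ is convex and increasing with $Q_B(x,0)=0$ by (A2) and the Euclidean norm is convex, the integrand $\xi\mapsto Q_B(x,|\xi|)$ is convex on $\R^n$; it is Carath\'eodory by (A1) and bounded below by the constant $-\underline Q\in L^1(B)$ by (A3), and likewise for $Q_A$. The standard semicontinuity theorem for convex, bounded-below Carath\'eodory integrands then gives that $u\mapsto\int_B Q_B(x,|\nabla u|)\,dx$ is sequentially weakly lower semicontinuous on $W^{1,p}(B)$ and $u\mapsto\int_A Q_A(x,|\nabla u|)\,dx$ on $W^{1,q}(A)$; adding, $\mathbb F_\sigma(u)\le\liminf_k\mathbb F_\sigma(u_k)=\inf_{\mathcal A}\mathbb F_\sigma$, so $u$ solves \eqref{gminimum}. (Equivalently, strong lower semicontinuity via Fatou combined with convexity and Mazur's lemma.)

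Finally, uniqueness would follow from strict convexity. By (A2) each $E\mapsto Q(x,E)$ is strictly convex and strictly increasing; together with the strict triangle inequality this makes $\xi\mapsto Q(x,|\xi|)$ strictly convex on $\R^n$, so $\mathbb F_\sigma$ is strictly convex in $\nabla u$. Two minimizers would therefore have the same gradient a.e.\ on $\Omega$; since their difference lies in $W^{1,m}_0(\Omega)$ (equal traces on $\partial\Omega$), they coincide. I expect the main obstacle to be the functional-analytic bookkeeping forced by the two growth regimes --- choosing the ambient space, guaranteeing the effective domain is nonempty (the $q>p$ cut-off construction), and verifying that the weak limit retains the stronger integrability simultaneously on $B$ and on $A$ --- rather than the semicontinuity or uniqueness steps, which are routine once the convexity of the integrand is in hand.
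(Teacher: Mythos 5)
Your proposal is correct and follows essentially the same route as the paper: both work in the ambient space $W^{1,p}(\Omega)\cup W^{1,q}(\Omega)=W^{1,\min\{p,q\}}(\Omega)$, derive coercivity with the exponent $\min\{p,q\}$ from the lower bounds in (A3) (the paper via the elementary inequality $E^p\ge E_0^{p-q}E^q$ for $E\ge E_0$ when $p>q$), and conclude existence and uniqueness from convexity and strict convexity by the direct method. The only difference is one of packaging: the paper delegates compactness and weak lower semicontinuity to the cited standard result \cite[Th. 3.30]{dacorogna2007direct}, whereas you unpack these steps explicitly and additionally verify that a finite-energy competitor exists (your cut-off construction for $q>p$), a point the paper only addresses in the remark following the theorem.
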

\begin{proof}
 {Let us consider the case $p>q$; if $E\ge E_0$ then  $ E^p\ge E_0^{p-q} E^q$}. By assumption (A3),  {for any $E\ge E_0$,} we have
\[
\begin{split}
& Q_B(x, E)\ge \underline{Q}\left[ \left(\frac{E}{E_0}\right)^p {-1}\right]\ge \underline{Q}\left[\left(\frac{E}{E_0}\right)^q  {-1}\right]\quad\text{a.e. in } B,\\
& Q_A(x, E)\ge \underline{Q}\left[ \left(\frac{E}{E_0}\right)^q {-1}\right]\qquad\qquad\qquad\qquad\quad\text{a.e. in } A.
\end{split}
\]
Therefore, setting $Q_\sigma=Q_B$ in $B$ and $Q_\sigma=Q_A$ in $A$, we have
\[
Q_\sigma(x, E)\ge  \underline{Q}\left[\left(\frac{E}{E_0}\right)^q {-1}\right]\quad \text{a.e. in } \Omega,
\]
 {$\text{for any }E\ge 0$ (since the inequality is trivial for any $E\ge E_0$).} Similarly, when $p<q$, we have
\[
Q_\sigma(x,E)\ge\underline{Q}\left[ \left(\frac{E}{E_0}\right)^p {-1}\right]\quad \text{a.e. in } \Omega,\]
for any  {$E\ge 0$}.

 {Hence, in both cases, the function $Q_\sigma(x,\cdot)$ is coercive for a.e. $x\in\Omega$. Therefore, since $Q_\sigma$ is also strictly convex by assumption, standard methods of the Calculus of Variations \cite[Th. 3.30]{dacorogna2007direct} assure that the solution exists and is unique.}
\end{proof}

Condition $f\in X_\diamond^p(\partial \Omega)$ is a  necessary assumption because it guarantees the existence of a function in $f+W_{0}^{1,p}(\Omega)$ such that $\mathbb F_\sigma (f)<+\infty$ (see \cite[Th. 3.30]{dacorogna2007direct} for details). Moreover, optimization problems on domains with holes have received a great deal of interest in recent years, see e.g. \cite{della2020optimal,gavitone2021isoperimetric,paoli2020stability,paoli2020sharp} and reference therein.

\subsection{Normalized solution}
Through this paper we study the behaviour of the solution of problem \eqref{gminimum} for large Dirichlet boundary data, i.e. the behaviour of $u^\lambda$ defined as  {the solution of}:
\begin{equation}
    \label{Fspezzata}
\min_{\substack{u\in W^{1,p}(\Omega)\cup W^{1,q}(\Omega)\\ u=\lambda f\ \text{on}\ \partial \Omega}}\mathbb F_\sigma(u),
\end{equation}
for $\lambda\to +\infty$.

To this purpose, as discussed in Section \ref{underlying}, it is convenient to introduce the normalized solution $v^\lambda$ defined as:
\begin{equation*}
v^\lambda=\frac{u^\lambda}{\lambda}.
\end{equation*} 


For any prescribed $f\in X^p_\diamond(\partial \Omega)$ and $\lambda>0$, $v^\lambda$ is the solution of the following variational problem:
\begin{equation}\label{G_norm}
\min_{\substack{v\in W^{1,p}(\Omega)\cup W^{1,q}(\Omega)\\ v=f\ \text{on}\ \partial \Omega}}\mathbb G^\lambda(v),\quad \mathbb G^\lambda(v)=\frac 1 {\lambda^p}\left(\int_{B} Q_B(x,\lambda|\nabla v(x)|)dx+\int_{A} Q_A(x,\lambda|\nabla v(x)|)dx\right).
\end{equation}
The multiplicative factor $1/\lambda^p$ is introduced in order to guarantee that the  {minimum values of the} functional $\mathbb G^\lambda$ are bounded for large $\lambda$. The normalized solution makes it possible to \lq\lq transfer\rq\rq\ parameter $\lambda$ in \eqref{Fspezzata} from the boundary data to the functional $\mathbb G^\lambda$.

Specifically, we will prove that $v^\lambda$ converges, under very mild hypotheses, for $\lambda\to+\infty$ to the solution of a problem where the material in region $A$ is replaced by either a PEC or a PEI.

 {In the PEC case ($p<q$) the solution $w$ is constant in the region A.}

 {In the PEI case ($p>q$)  the limit of $v^\lambda$ in $B$ is termed $v_B$, whereas the limit of $v^\lambda$ in $A$ is termed $v_A$.
Moreover (i) $v_B$ and $v_A$ arise from a  {weighted}  $p-$Laplace and a  {weighted}  $q-$Laplace problem, respectively and (ii) $v_B$ provides the boundary data on $\partial A$ for evaluating $v_A$.}


\section{The fundamental inequality for large Dirichlet data}
\label{mean_sec}
In this Section we provide the main tool to achieve the convergence results in the limiting cases for \lq\lq large\rq\rq\ Dirichlet boundary data. Specifically, we will show that the asymptotic behaviour of the Dirichlet Energy corresponds to a $p-$Laplace modelled equation \cite{Salo2012_IP,guo2016inverse} in the domain $B$.

In the following, we study the asymptotic behaviour of the Dirichlet Energy in the outer region $B$.

We first analyse the special case when the Dirichlet Energy density $Q_B$ is modelled on the  {weighted}  $p-$Laplace case, that is $Q_B(x,E)=\theta(x)E^p$ on $B\times[0,+\infty)$; we then give the result in the more general case. 

\begin{lem}
\label{factorizable_lemma3}
Let $1<
p<+\infty$, $F
\subset\R^n$ be a bounded domain with Lipschitz boundary, 
 {$\theta\in L^\infty_+(F)$} and 
$\{ {h}_n\}_{n\in\N}$ be 
a sequence weakly convergent  to $ {h}$ in $W^{1,p}(F)$. 
Then we have  \begin{equation}
\label{only_in_factor_E3}
\int_{F} \theta(x)|\nabla  {h}(x)|^p dx\le\liminf_{n\to+\infty}\int_{F} \theta(x)|\nabla  {h}_n(x)|^p dx.
\end{equation}
\end{lem}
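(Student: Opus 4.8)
The plan is to recognize \eqref{only_in_factor_E3} as the weak lower semicontinuity of the convex integral functional $\Phi(h) := \int_{F} \theta(x)|\nabla h(x)|^p\,dx$ on $W^{1,p}(F)$, and to prove it directly rather than invoking the general machinery, exploiting that the integrand $\xi \mapsto \theta(x)|\xi|^p$ is convex and $C^1$ in $\xi$ because $\theta \geq 0$. First I would record the tangent-line (subgradient) inequality for the convex function $\xi \mapsto |\xi|^p$ on $\R^n$, namely
\[
|\eta|^p \;\geq\; |\xi|^p + p\,|\xi|^{p-2}\xi\cdot(\eta - \xi)\qquad \forall\, \xi,\eta\in\R^n.
\]
Applying this pointwise with $\eta = \nabla h_n(x)$ and $\xi = \nabla h(x)$, multiplying by the nonnegative weight $\theta(x)$, and integrating over $F$ gives
\[
\int_{F} \theta\,|\nabla h_n|^p\,dx \;\geq\; \int_{F} \theta\,|\nabla h|^p\,dx \;+\; p\int_{F} \theta\,|\nabla h|^{p-2}\nabla h\cdot(\nabla h_n - \nabla h)\,dx.
\]

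The crux is to show that the last integral tends to zero as $n \to +\infty$. The key observation is that the fixed vector field $g := \theta\,|\nabla h|^{p-2}\nabla h$ belongs to $L^{p'}(F;\R^n)$, with $p' = p/(p-1)$: indeed $|g|^{p'} = \theta^{p'}|\nabla h|^{(p-1)p'} = \theta^{p'}|\nabla h|^{p}$, which is integrable since $\theta \in L^\infty(F)$ and $\nabla h \in L^p(F)$. Because $h_n \rightharpoonup h$ weakly in $W^{1,p}(F)$ forces $\nabla h_n \rightharpoonup \nabla h$ weakly in $L^p(F;\R^n)$, testing this weak convergence against the fixed function $g \in L^{p'}$ yields $\int_{F} g\cdot \nabla h_n\,dx \to \int_{F} g\cdot \nabla h\,dx$, so the cross term $\int_{F} g\cdot(\nabla h_n - \nabla h)\,dx \to 0$. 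Taking the $\liminf$ of both sides of the displayed inequality then gives \eqref{only_in_factor_E3}.

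I expect the main (and essentially the only) delicate point to be the $L^p$--$L^{p'}$ duality argument for the cross term: one must verify the integrability of $g$ and correctly match the conjugate exponents, which is where the hypotheses $1<p<+\infty$ (so that $p'<+\infty$) and $\theta\in L^\infty_+(F)$ enter. The positivity lower bound $\theta \geq c_0$ is in fact not needed for this particular inequality---only nonnegativity and boundedness of $\theta$ are used---but it guarantees that $\Phi$ controls the full $W^{1,p}$ seminorm, which is what makes the lemma useful in the sequel. An alternative route would be to invoke Mazur's lemma to pass from weak to strong convergence along convex combinations of the $\nabla h_n$ and then apply Fatou's lemma, but the tangent-line inequality delivers the result more transparently.
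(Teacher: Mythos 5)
Your proof is correct, but it takes a genuinely different route from the paper. The paper proves the lemma via Mazur's lemma: it extracts a subsequence realizing the $\liminf$, forms convex combinations $z_n$ of the $h_{n_l}$ that converge strongly in $W^{1,p}(F)$, passes to a further subsequence with $\nabla z_{n_k}\to\nabla h$ a.e., and then combines Fatou's lemma with the convexity of $|\cdot|^p$ to close the chain of inequalities --- this is precisely the "alternative route" you mention in your last sentence. Your argument instead uses the first-order convexity (tangent-line) inequality for $\xi\mapsto|\xi|^p$ together with $L^p$--$L^{p'}$ duality: the subgradient field $g=\theta\,|\nabla h|^{p-2}\nabla h$ lies in $L^{p'}(F;\R^n)$ (your computation $|g|^{p'}=\theta^{p'}|\nabla h|^p$ is the right check), so the cross term vanishes by weak convergence of $\nabla h_n$ in $L^p$, and the inequality survives the $\liminf$. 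Your route is shorter and more transparent about where each hypothesis enters; it does require the integrand to be differentiable in $\xi$ and the subgradient field to land in the dual space, both of which hold here (including at points where $\nabla h=0$, with the usual convention $|\xi|^{p-2}\xi=0$ at $\xi=0$ for $1<p<2$). The paper's Mazur--Fatou argument is heavier on subsequence bookkeeping but extends verbatim to convex integrands that are not differentiable, which may explain its choice. Your side remark that only $0\le\theta\in L^\infty$ is needed --- not the lower bound $\theta\ge c_0$ --- is also accurate; the paper's proof likewise never uses the lower bound.
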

\begin{proof}
Let us set $L:=\liminf_{n\to+\infty}\int_{F} \theta(x)|\nabla  {h}_n(x)|^p dx$.  {If $L=+\infty$, the inequality \eqref{only_in_factor_E3} is trivial; otherwise we} consider a subsequence  {$\{n_j\}_{j\in\N}$} such that
\[
\lim_{ {j}\to+\infty}\int_{F} \theta(x)|\nabla  {h_{n_j}}(x)|^p dx=L.
\]
This means that for any $\varepsilon>0$, there exists $\nu\in\N$ such that
\begin{equation}
    \label{defin_liminf}
L-\varepsilon<\int_{F} \theta(x)|\nabla  {h_{n_j}}(x)|^p dx<L+\varepsilon
\end{equation}
for any $ {j}\ge\nu$. 
Then, by Mazur's Lemma (refer for example to
\cite{dunford1963linear,renardy2006introduction}), for any $n\in\N$ there exists a function $N:\N\to\N$ and a sequence $\{\alpha_{n, {l}}\}_{ {l}=n}^{N(n)}$ such that
\begin{enumerate}
\item[(M1)] $\alpha_{ {n},l}\geq 0$\ for any $ {l}\in[n,N(n)]$, 
\item[(M2)] $\sum_{ {l}=n}^{N(n)}\alpha_{n, {l}}=1$,
\item[(M3)] $z_{ {n}}:=\sum_{ {l}=n}^{N(n)}\alpha_{n, {l}} {h_{n_l}}$ $\to  {h}$ in $W^{1,p}(F)$.
\end{enumerate}
Then there exists a subsequence $\{z_{n_{ {k}}}\}_{ {k}\in\N}$ such that
\begin{equation}
\label{eguagl_liminf}
\liminf_{n\to+\infty} \int_{F} \theta(x)|\nabla z_n(x)|^p dx=
\lim_{ {k}\to+\infty}\int_{F} \theta(x)|\nabla z_{n_{ {k}}}(x)|^p dx
\end{equation}
and another subsequence, still indicated with $\{z_{n_{ {k}}}\}_{ {k}\in\N}$,
such that $\nabla z_{n_{ {k}}}\to \nabla  {h}$ a.e. in  {$F$} \cite[Chap. 18]{leoni17}. Therefore, we have
\begin{equation*}
    \begin{split}
\mathbb \int_{F} \theta(x)|\nabla  {h}(x)|^p dx&=  \int_{F} \theta(x)\lim_{ k\to+\infty}|\nabla z_{n_k}(x)|^p dx\leq
\liminf_{ {k}\to+\infty}\int_{F} \theta(x)|\nabla z_{n_{ {k}}}(x)|^p dx\\
&=\lim_{ {k}\to+\infty}\int_{F} \theta(x)|\nabla z_{n_{ {k}}}(x)|^p dx =\liminf_{n\to+\infty}\int_{F} \theta(x)|\nabla z_n(x)|^p dx\\
&\le\liminf_{n\to+\infty}\sum_{ {l}=n}^{N(n)}\alpha_{n, {l}}\int_{F} \theta(x)|\nabla  {h_{n_l}}(x)|^p dx\\
&<\liminf_{n\to+\infty}\mathbb   \sum_{ {l}=n}^{N(n)}\alpha_{n, {l}}\left(L+\varepsilon\right)=L+\varepsilon\\
&=\liminf_{n\to+\infty}\int_{F} \theta(x)|\nabla  {h}_n(x)|^p dx+\varepsilon,
\end{split}
\end{equation*}
where in the first line the equality follows from the convergence result of (M3) and the inequality follows from Fatou's Lemma, in the second line we applied \eqref{eguagl_liminf}, in the third line we applied the convexity of $|\cdot|^p$, and in the fourth line we applied \eqref{defin_liminf}.

Conclusion \eqref{only_in_factor_E3} follows from the arbitrariness of $\varepsilon>0$.
\end{proof}
When $Q_B$ is factorized, the assumptions (A4) are satisfied.

The next step consists in extending \eqref{only_in_factor_E3} from a  {weighted}  $p-$Laplace case to the quasilinear case. In doing this, we restrict the validity of the result to sequences of the solutions of problem \eqref{G_norm}.

The main difficulty in proving this result lies in evaluating an upper bound of the Lebesgue measure of that part of $B$ where the solutions $v^\lambda$ admit large values of the gradient (see Figure \ref{fig_7_insiemi}).
\begin{figure}
\centering
\includegraphics[width=\textwidth]{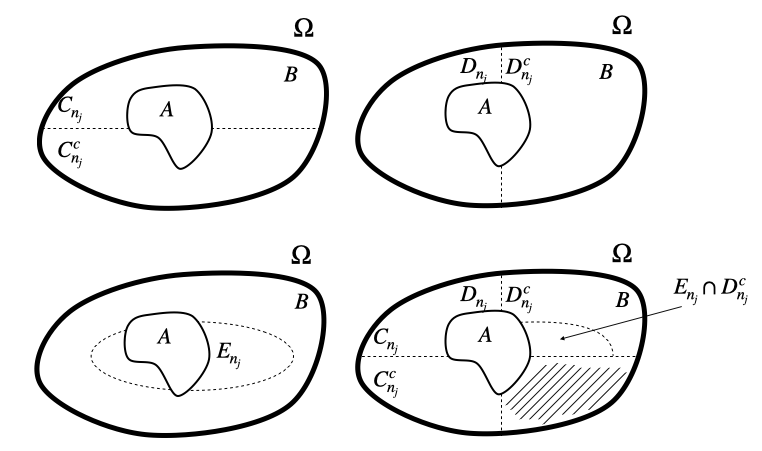}
\caption{The objective of the proof is to show that the set of the points in $B$ such that the solution $v^{\lambda_{n_j}}$ does not satisfy the fundamental inequality ($C^c_{n_j}$) and admits large values of the gradient ($D^c_{n_j}$) can be made small enough (shaded region).}
    \label{fig_7_insiemi}
\end{figure}
\begin{lem}
\label{lem_dis_fund_infty}
Let $1<p<+\infty$, $f\in X^p_\diamond(\partial \Omega)$, (A1), (A2), (A3), (A4) hold and let the solutions $v^\lambda$ of \eqref{G_norm} be weakly convergent to $v$ in $W^{1,p}(B)$, as $\lambda\to+\infty$. Let $\{\lambda_n\}_{n\in\N}$ be an increasing sequence such that $\lambda_n\to+\infty$, as $n\to+\infty$, and
\begin{equation}
    \label{seq_to_liminf_lem_infty}
\lim_{n\to+\infty}\frac{1}{\lambda_n^p}\int_{B}Q_B(x,\lambda_n|\nabla v^{\lambda_n}(x)|)dx=\liminf_{\lambda\to+\infty}\frac{1}{\lambda^p}\int_{B}Q_B(x,\lambda|\nabla v^{\lambda}(x)|)dx.
\end{equation}
Then, for any $0<\delta\le\inf_{B}\beta$ and $\theta>0$, there exists $F_{\delta,\theta}\subseteq B$ with $|B\setminus F_{\delta,\theta}|<\theta$ such that
\begin{equation}
\label{step_to_conclude_infty}
\liminf_{n\to+ \infty}\int_{F_{\delta,\theta}} (\beta(x)-\delta)|\nabla v^{\lambda_n}(x)|^p dx\le \liminf_{\lambda\to+\infty}\frac{1}{\lambda^p}\int_{B}Q_B(x,\lambda|\nabla v^{\lambda}(x)|)dx,
\end{equation}
where $\beta$ is given in (A4).
\end{lem}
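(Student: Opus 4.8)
The plan is to deduce \eqref{step_to_conclude_infty} from the pointwise asymptotics (A4) by first turning that $x$-dependent limit into a lower bound that is \emph{uniform} in $x$ on a large subset of $B$, and then splitting $F_{\delta,\theta}$ according to the size of the rescaled gradient $\lambda_n|\nabla v^{\lambda_n}|$.

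First I would make the convergence in (A4) uniform via Egorov's theorem. For $k\in\N$ set $g_k(x):=\inf_{E\ge k} Q_B(x,E)/E^p$; since $Q_B$ is Carathéodory (A1) and continuous in $E$ (A2), the infimum may be computed over rational $E$, so each $g_k$ is measurable. By (A4) the sequence $g_k$ is nondecreasing and $g_k\to\beta$ a.e. in $B$, and $|B|<+\infty$, so Egorov's theorem yields, for every $\theta>0$, a set $F_{\delta,\theta}\subseteq B$ with $|B\setminus F_{\delta,\theta}|<\theta$ on which the convergence is uniform. Hence there is a threshold $E_1=E_1(\delta,\theta)\ge 0$ with
\[
Q_B(x,E)\ge(\beta(x)-\delta)E^p \qquad\text{for all } x\in F_{\delta,\theta},\ E\ge E_1.
\]

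Next, for each $n$, I would split $F_{\delta,\theta}$ into $D_n:=\{x\in F_{\delta,\theta}:\lambda_n|\nabla v^{\lambda_n}(x)|\ge E_1\}$ and its complement. Since $Q_B\ge 0$ by (A2), discarding the integral over $B\setminus F_{\delta,\theta}$ and then over $F_{\delta,\theta}\setminus D_n$, and applying the uniform bound on $D_n$, gives
\[
\frac{1}{\lambda_n^p}\int_B Q_B(x,\lambda_n|\nabla v^{\lambda_n}|)\,dx
\ge \frac{1}{\lambda_n^p}\int_{D_n} Q_B(x,\lambda_n|\nabla v^{\lambda_n}|)\,dx
\ge \int_{D_n}(\beta(x)-\delta)|\nabla v^{\lambda_n}|^p\,dx.
\]
On the complementary set $F_{\delta,\theta}\setminus D_n$ one has $|\nabla v^{\lambda_n}|<E_1/\lambda_n$, and since $0\le\beta-\delta\le\norm{\beta}_{L^\infty(B)}$ (recall $\delta\le\inf_B\beta$), the omitted contribution
\[
\eta_n:=\int_{F_{\delta,\theta}\setminus D_n}(\beta-\delta)|\nabla v^{\lambda_n}|^p\,dx
\le \norm{\beta}_{L^\infty(B)}\,|B|\,\left(\frac{E_1}{\lambda_n}\right)^p\xrightarrow[n\to\infty]{}0
\]
is nonnegative and vanishes.

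Finally I would pass to the limit along $\{\lambda_n\}$. Combining the two displays, for every $n$,
\[
\frac{1}{\lambda_n^p}\int_B Q_B(x,\lambda_n|\nabla v^{\lambda_n}|)\,dx
\ge \int_{F_{\delta,\theta}}(\beta-\delta)|\nabla v^{\lambda_n}|^p\,dx-\eta_n ,
\]
with $0\le\eta_n\to 0$. Taking $\liminf_{n\to\infty}$, using \eqref{seq_to_liminf_lem_infty} on the left (so that the left-hand side tends to $\liminf_{\lambda\to+\infty}\frac{1}{\lambda^p}\int_B Q_B\,dx$) and $\eta_n\to 0$ on the right, yields exactly \eqref{step_to_conclude_infty}. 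I expect the Egorov uniformization to be the main obstacle: hypothesis (A4) provides only a pointwise, $x$-dependent rate of convergence, so producing a single threshold $E_1$ valid simultaneously on $F_{\delta,\theta}$—equivalently, confining to a set of measure $<\theta$ the region where the fundamental inequality $Q_B\ge(\beta-\delta)E^p$ may fail while the gradient is large—is the delicate point, and the measurability of $g_k$ together with $|B|<+\infty$ is what makes it rigorous. Note that the assumed weak convergence of $v^\lambda$ is not needed for this inequality; it enters later, through the weak lower semicontinuity of Lemma \ref{factorizable_lemma3}, to obtain the matching bound.
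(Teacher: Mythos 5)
Your proof is correct, but it takes a genuinely different route from the paper's. The paper does not invoke Egorov's theorem: for a fixed auxiliary parameter $\ell>0$ it introduces solution-dependent sets $C_n$ (where $(\beta(x)-\delta)(\lambda_n|\nabla v^{\lambda_n}|)^p\le Q_B(x,\lambda_n|\nabla v^{\lambda_n}|)$ holds at the actual gradient value), $D_n$ (where $|\nabla v^{\lambda_n}|\le\ell$), and $E_n$ (where the inequality $(\beta(x)-\delta)E^p\le Q_B(x,E)$ holds for all $E>\lambda_n\ell$); it then uses $|E_n|\uparrow|B|$ (a consequence of (A4)) to extract a subsequence $\{n_j\}$ with $|C_{n_j}^c\cap D_{n_j}^c|\le\theta/2^j$, defines $F_{\delta,\theta}=\bigcap_j\left(C_{n_j}\cup D_{n_j}\right)$, and estimates the integral over $C_{n_j}$ by the normalized energy and the integral over $D_{n_j}$ by $\overline Q\,\ell^p|B|/E_0^p$, removing this error only at the very end by the arbitrariness of $\ell$. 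Your argument replaces this hand-made construction by a direct appeal to Egorov applied to $g_k=\inf_{E\ge k}Q_B(\cdot,E)/E^p\uparrow\beta$, which is precisely the uniformization mechanism the paper rebuilds from scratch. What your route buys: the set $F_{\delta,\theta}$ depends only on the constitutive law (not on the solutions $v^{\lambda_n}$, not on a subsequence, and not on any auxiliary parameter $\ell$); the threshold $E_1$ is fixed rather than growing like $\lambda_n\ell$, so the low-gradient contribution is $O(\lambda_n^{-p})$ and vanishes automatically, with no final limiting step. This is arguably cleaner, since in the paper's proof the concluding step \lq\lq by the arbitrariness of $\ell$\rq\rq\ is delicate precisely because the paper's $F_{\delta,\theta}$ itself depends on $\ell$ --- an issue your proof avoids entirely. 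Your closing remark is also accurate: the weak convergence of $v^\lambda$ is not used in either proof of this lemma; it only matters downstream, in Proposition \ref{fund_ine_prop}, where it is combined with Lemma \ref{factorizable_lemma3}.
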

\begin{proof}
Let us fix $0<\delta\le\inf_{B}\beta$ and $\theta>0$, for any $n\in\N$, we set
\[
C_{\delta,n}=C_{n}:=\left\{x\in B\ : \ (\beta(x)-\delta)(\lambda_n|\nabla v^{\lambda_n}(x)|)^p\le Q_B(x,\lambda_n|\nabla v^{\lambda_n}(x)|)\right\}.
\]
Now, for any constant $\ell>0$, we set
\begin{equation*}
\begin{split}
D_{\ell,n}=D_{n}:=\{ x\in B \ : \ \lambda_n|\nabla v^{\lambda_n}(x)|\le \lambda_n\ell\}\\
D_{\ell,n}^c=D^c_{n}:=\{ x\in B \ : \ \lambda_n|\nabla v^{\lambda_n}(x)|>\lambda_n \ell\}
\end{split}
\end{equation*}
Now, let us define
\[
E_{\delta,\ell,n}=E_{n}:=\left\{x\in B\ : \ (\beta(x)-\delta)E^p\leq Q_B(x,E)\ \  \forall E> \lambda_n \ell\right\}.
\]
Therefore, we have
\begin{equation}
\label{chain_inclusion3}
\begin{split}
& C_{n}\supseteq C_{n}\cap D^c_{n}\supseteq E_{n}\cap D^c_{n},\\
& C_{n}\cup D_{n}\supseteq
E_n\cup D_n
=(D^c_{n}\setminus E_{n})^c,
\end{split}
\end{equation}
for any $n\in\N$.
Let us observe that $E_{n}$ is increasing with respect to $n$ and that $\left| \bigcup_{n=1}^{+\infty}E_{n}\right|=|B|$. Therefore there exists a natural number $n_1=n_1(\theta)$ such that
\begin{equation}
    \label{stimaF3}
\left|\bigcup_{n=1}^{n_1} E_{n}\right|=\left|E_{n_1}\right|\ge |B|-\frac \theta 2.
\end{equation}
Passing to the complementary sets in \eqref{chain_inclusion3}, we obtain
\[
C^c_{n_1}\cap D^c_{n_1}\subseteq  D^c_{n_1}\setminus E_{n_1}\subseteq B \setminus E_{n_1}\quad\text{with}\quad |C_{n_1}^c\cap D^c_{n_1}|\leq 
\frac\theta 2,
\]
where we have used 
\eqref{stimaF3}. Analogously, we construct a subsequence $\{\lambda_{n_j}\}_{j\in\N}$ such that
\[
|C_{n_j}^c \cap D^c_{n_j}|\leq \frac{\theta}{2^j}.
\]
Then, by defining
\[
F_{\delta,\theta}: =\bigcap_{j=1}^\infty (C_{n_j}\cup D_{n_j}), 
\]
we have
\[
F^c_{\delta,\theta}= \bigcup_{j=1}^\infty (C_{n_j} \cup D_{n_j})^c
\quad\text{with}\quad
|F^c_{\delta,\theta}|\leq \sum_{j=1}^{+\infty} \frac{\theta}{2^j}=\theta,
\]
which implies
\[
|B |-\theta\le |F_{\delta,\theta}|\le |B|.
\]
Therefore, we have
\begin{equation*}
\begin{split}
&\liminf_{n\to+\infty}\int_{F_{\delta,\theta}}(\beta(x)-\delta)|\nabla v^{\lambda_{n}}(x)|^p dx\leq\liminf_{j\to+\infty}\int_{F_{\delta,\theta}}(\beta(x)-\delta)|\nabla v^{\lambda_{n_j}}(x)|^p dx\\
&\le\liminf_{j\to+\infty}\int_{C_{n_j}\cup D_{n_j}}(\beta(x)-\delta)|\nabla v^{\lambda_{n_j}}(x)|^p dx\\
&\le\liminf_{j\to+\infty}\int_{C_{n_j} }(\beta(x)-\delta)|\nabla v^{\lambda_{n_j}}(x)|^p dx+\limsup_{j\to+\infty}\int_{D_{n_j}}(\beta(x)-\delta)|\nabla v^{\lambda_{n_j}}(x)|^p dx\\
&\le\liminf_{j\to+\infty}\frac{1}{\lambda_{n_j}^p}\int_{C_{n_j}} Q_B(x,|\lambda_{n_j}\nabla v^{\lambda_{n_j}}(x)|)dx+\frac{ \overline{Q}}{E_0^p} \ell^p |B|\\
&\le\lim_{j\to+\infty}\frac{1}{\lambda_{n_j}^p}\int_B Q_B(x,|\lambda_{n_j}\nabla v^{\lambda_{n_j}}(x)|)dx+\frac{ \overline{Q}}{E_0^p} \ell^p |B|\\
&=\liminf_{\lambda\to+\infty}\frac{1}{\lambda^p}\int_{B}Q_B(x,\lambda|\nabla v^{\lambda}(x)|)dx+\frac{ \overline{Q}}{E_0^p} \ell^p |B|,
\end{split}
\end{equation*}
where in the second line we exploited $\beta(x)-\delta \geq 0$ a.e. in $B$, in the fourth line we exploited $\beta\leq \overline Q / E_0^p$, as follows from (A3) and (A4) and in the last line we exploited \eqref{seq_to_liminf_lem_infty}. Therefore, the conclusion follows from the arbitrariness of $\ell>0$.
\end{proof}

We are now in a position to prove the previous result in the general case expressing the asymptotic behaviour of the Dirichlet Energy for the outer region in terms of a factorized $p-$Laplacian form. Specifically, the following Proposition provides a fundamental inequality relating the two cases.

\begin{prop}\label{fund_ine_prop}
Let $1<p<+\infty$, $f\in X^p_\diamond(\partial \Omega)$, (A1), (A2), (A3), (A4) hold and the solutions $v^\lambda$ of \eqref{G_norm} weakly converge to $v$ in $W^{1,p}(B)$, as $\lambda\to+\infty$. Then \begin{equation}
\label{fundamental_inequality3}
\int_{B}\beta(x)|\nabla v(x)|^pdx\le
\liminf_{\lambda\to+\infty}\frac 1 {\lambda^p}\int_{B} Q_B(x,\lambda|\nabla v^\lambda(x)|)dx,
\end{equation}
where $\beta$ is given in (A4).
\end{prop}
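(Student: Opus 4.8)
The plan is to assemble the statement from the two results just proved, removing the auxiliary parameters $\delta$ and $\theta$ by two successive limits. First I would set $R:=\liminf_{\lambda\to+\infty}\frac{1}{\lambda^p}\int_B Q_B(x,\lambda|\nabla v^\lambda(x)|)\,dx$, the right-hand side of \eqref{fundamental_inequality3} (if $R=+\infty$ there is nothing to prove), and fix an increasing sequence $\{\lambda_n\}_{n\in\N}$ realizing this liminf as in \eqref{seq_to_liminf_lem_infty}. By the hypothesis of the Proposition, $v^{\lambda_n}\rightharpoonup v$ in $W^{1,p}(B)$, so in particular $\nabla v^{\lambda_n}\rightharpoonup\nabla v$ weakly in $L^p(B;\R^n)$.

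Next I would fix $\delta$ with $0<\delta<\essinf_B\beta$, which is positive because $\beta\in L^\infty_+(B)$ by (A3)–(A4), and fix $\theta>0$. Applying Lemma \ref{lem_dis_fund_infty} produces a set $F_{\delta,\theta}\subseteq B$ with $|B\setminus F_{\delta,\theta}|<\theta$ for which \eqref{step_to_conclude_infty} holds. On $F_{\delta,\theta}$ the weight $w:=\beta-\delta$ satisfies $w\ge\essinf_B\beta-\delta>0$, hence $w\in L^\infty_+(F_{\delta,\theta})$. Combining the weak lower semicontinuity of Lemma \ref{factorizable_lemma3} (applied with weight $w$ on $F_{\delta,\theta}$) with \eqref{step_to_conclude_infty} gives
\[
\int_{F_{\delta,\theta}}(\beta(x)-\delta)|\nabla v(x)|^p\,dx\le\liminf_{n\to+\infty}\int_{F_{\delta,\theta}}(\beta(x)-\delta)|\nabla v^{\lambda_n}(x)|^p\,dx\le R.
\]

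To pass from $F_{\delta,\theta}$ to all of $B$ I would let $\theta\to0$ with $\delta$ fixed. The integrand $g_\delta:=(\beta-\delta)|\nabla v|^p$ is a fixed nonnegative function in $L^1(B)$, since $\beta\in L^\infty(B)$ and $v\in W^{1,p}(B)$. By absolute continuity of the Lebesgue integral, $\int_{B\setminus F_{\delta,\theta}}g_\delta\to0$ as $|B\setminus F_{\delta,\theta}|<\theta\to0$, so writing $\int_B g_\delta=\int_{F_{\delta,\theta}}g_\delta+\int_{B\setminus F_{\delta,\theta}}g_\delta$ and letting $\theta\to0$ yields $\int_B(\beta-\delta)|\nabla v|^p\le R$. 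Finally I would let $\delta\to0^+$: since $(\beta-\delta)|\nabla v|^p\nearrow\beta|\nabla v|^p$ monotonically, the monotone convergence theorem gives $\int_B\beta|\nabla v|^p\le R$, which is exactly \eqref{fundamental_inequality3}.

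The substantive difficulty has already been handled inside Lemma \ref{lem_dis_fund_infty}, namely the control of the exceptional set where $|\nabla v^{\lambda_n}|$ is large and $Q_B$ has not yet attained its asymptotic profile $\beta E^p$; here the two limit passages are routine. The one point that deserves care is that Lemma \ref{factorizable_lemma3} is stated for Lipschitz domains, whereas $F_{\delta,\theta}$ is merely a measurable subset of $B$. This is not an obstacle because the lower semicontinuity used is equivalently that of the convex functional $G\mapsto\int_{F_{\delta,\theta}}w\,|G|^p$ on $L^p(F_{\delta,\theta};\R^n)$, which is weakly lower semicontinuous by convexity of $|\cdot|^p$; and the weak $L^p(B)$ convergence $\nabla v^{\lambda_n}\rightharpoonup\nabla v$ restricts to weak $L^p(F_{\delta,\theta};\R^n)$ convergence, as seen by testing against $L^{p'}$ functions extended by zero outside $F_{\delta,\theta}$. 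Thus the Mazur–Fatou mechanism behind Lemma \ref{factorizable_lemma3} applies verbatim on $F_{\delta,\theta}$.
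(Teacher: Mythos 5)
Your proposal is correct and follows essentially the same route as the paper's own proof: fix a sequence realizing the liminf as in \eqref{seq_to_liminf_lem_infty}, invoke Lemma \ref{lem_dis_fund_infty} to obtain $F_{\delta,\theta}$ and \eqref{step_to_conclude_infty}, apply the weighted weak lower semicontinuity of Lemma \ref{factorizable_lemma3} with weight $\beta-\delta$ on that set, and then remove the auxiliary parameters. The only differences are in bookkeeping, and they are mild improvements: you eliminate $\theta$ and $\delta$ by two successive limits (absolute continuity of the integral of $(\beta-\delta)|\nabla v|^p$, then monotone convergence), whereas the paper ties both parameters to a single $\varepsilon$ via the measure $\kappa$ and the choice $\delta\leq\varepsilon/\left(2\int_B|\nabla v|^p\,dx\right)$ --- so your argument needs no separate treatment of the case $v$ constant --- and you explicitly justify applying the lower semicontinuity on the merely measurable set $F_{\delta,\theta}$ (weak $L^p$ convergence restricts to subsets, and the weighted convex functional is weakly lower semicontinuous there), a point the paper passes over silently even though Lemma \ref{factorizable_lemma3} is stated only for Lipschitz domains.
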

\begin{proof}
We assume that $v$ is nonconstant, otherwise the conclusion is trivial. Therefore, the integral at the l.h.s. of \eqref{fundamental_inequality3} is positive because $\beta\in L^\infty_+(B)$.

We firstly observe that the measure
\begin{equation*}
\kappa: F\in {\mathcal B(\Omega)}\mapsto \int_F\beta(x)|\nabla v(x)|^pdx,
\end{equation*}
 {where $\mathcal B(\Omega)$ is the class of the borelian sets contained in $\Omega$,}
is absolutely continuous with respect to the Lebesgue measure; then, for any $\varepsilon>0$, there exists $\theta>0$ such that for $|F|<\theta$ implies $\kappa(F)<\frac\varepsilon 2$.  {Then, for any $0<\delta\le\inf_{B}\beta$}, the set $F_{\delta, \theta}$, given in Lemma \ref{lem_dis_fund_infty}, satisfies $|B\setminus F_{\delta,\theta}|<\theta$ and implies that
\[
\kappa(B\setminus F_{\delta,\theta})=\int_{B\setminus F_{\delta,\theta}}\beta(x)|\nabla v(x)|^pdx <\frac \varepsilon 2.
\] 

Let $\{\lambda_n\}_{n\in\N}$ be an increasing sequence such that  $\lambda_n\to+\infty$ as $n\to+\infty$, with $\lambda_n>0\ \forall n\in\N$ satisfying \eqref{seq_to_liminf_lem_infty} appearing in Lemma \ref{lem_dis_fund_infty}. 
We have
\begin{equation*}
\begin{split}
&\int_{B} \beta(x)|\nabla v(x)|^p dx-\varepsilon=\int_{B\setminus F_{\delta,\theta}} \beta(x)|\nabla v(x)|^p dx+\int_{F_{\delta,\theta}} \beta(x)|\nabla v(x)|^p dx- \varepsilon\\
&<\int_{F_{\delta,\theta}} \beta(x)|\nabla v(x)|^p dx- \frac \varepsilon 2 =\int_{F_{\delta,\theta}} \beta(x)|\nabla v(x)|^p dx-\delta \int_{B} |\nabla v(x)|^p dx\\
&\le \int_{F_{\delta,\theta}} (\beta(x)-\delta)|\nabla v(x)|^p dx\le \liminf_{n\to+\infty}\int_{F_{\delta,\theta}} (\beta(x)-\delta)|\nabla v^{\lambda_n}(x)|^p dx\\
&\le\liminf_{\lambda\to+\infty}\frac{1}{\lambda^p}\int_{B}Q_B(x,t|\nabla v^{\lambda}(x)|)dx.
\end{split}
\end{equation*}
where we applied the absolute continuity of $\kappa$ in the second line, we set  {$\delta\leq\min\{\varepsilon/\left(2\int_{B}|\nabla v(x)|^p dx\right), \inf_B \beta(x)\}$}, we applied the lower semicontinuity of the integral functional in the third line, and we exploited  the \eqref{step_to_conclude_infty} of Lemma \ref{lem_dis_fund_infty} in the fourth line.

The conclusion follows from letting $\varepsilon\to 0^+$.
\end{proof}

\section{Limit for large Dirichlet data}\label{large_sec}
In this Section, we treat the limiting case of problem \eqref{G_norm} for large boundary data, i.e when $\lambda$ approaches infinity. 

We distinguish two cases according to the values of $p$ and $q$, which correspond to the  {asymptotic} growth exponents of the electrical conductivity in $B$ and $A$, respectively:
\begin{enumerate}
    \item[(i)] $1<q<p<+\infty$ (see Section \ref{Large_bigger});    
    \item[(ii)] $1<p<q<+\infty$ (see Section \ref{Large_smaller}).
\end{enumerate}

Specifically, in the first case, we prove that: 
\begin{itemize}
\item[(i.a)] $v^\lambda\rightharpoonup v_B$ in $W^{1,p}(B)$, as $\lambda\to +\infty$, where $v_B$ in $B$ is the unique solution of problem \eqref{pproblem_B};
\item[(i.b)] $v^\lambda\rightharpoonup v_A$ in $W^{1,q}(A)$, as $\lambda\to +\infty$, where $v_A$ in $A$ is the unique solution of problem \eqref{qproblem_A} (under an additional hypothesis);
\end{itemize}
For $1<q<p$, the limiting solution is characterized by the following  problems:
\begin{equation}
    \label{Hinfty}
\min_{\substack{v\in W^{1,p}(B)\\ v=f\ \text{on}\ \partial \Omega}}\mathbb B(v),\quad \mathbb B(v)=\int_{B} \beta(x)|\nabla v(x)|^pdx
,
\end{equation}
\begin{equation}
\label{Linfty}
\min_{\substack{v\in W^{1,q}(A)\\ v=v_B\ \text{on}\ \partial A}}\mathbb A (v),\quad \mathbb A(v)=\int_{ A} \alpha(x)|\nabla v(x)|^q dx.
\end{equation}
Hereafter, we  {recall that we have denoted by} $v_B\in W^{1,p}(B)$ and $v_A\in W^{1,q}(A)$ as the unique solutions of \eqref{Hinfty} and \eqref{Linfty}, respectively.
 
Problems \eqref{Hinfty} and \eqref{Linfty} are the variational form of problems \eqref{pproblem_B} and \eqref{qproblem_A}, respectively.

Whereas, in the second case, we prove that:

\begin{itemize}
\item[(ii.a)] $v^\lambda\rightharpoonup  w$ in $W^{1,p}(\Omega)$, as $\lambda\to +\infty$, where $ w$ in $B$ is the unique solution of problem \eqref{pproblem_Bgrad} and it is constant in each connected component of $A$.
\item[(ii.b)] $v^\lambda\to  w$ in $W^{1,q}(A)$, as $\lambda\to +\infty$, where $ w$ is constant in any connected component of $A$.
\end{itemize}

In the latter case, the constant value assumed by $ w$ in each connected component of $A$ guarantees the continuity of $ w$ across the boundaries with $B$.

For $1<p<q$, the limiting solution is characterized by the following problem:
\begin{equation}
\label{N}
\min_{\substack{v\in W^{1,p}(\Omega)\\ |\nabla v|=0\ \text{a.e. in}\ A \\ v=f\ \text{on}\ \partial \Omega}}\mathbb B(v),\quad \mathbb B(v)=\int_{B} \beta(x)|\nabla v(x)|^{p} dx.
\end{equation}
The unique solution of \eqref{N} is denoted by $ {w\in W^{1,p}(\Omega)}$. Problem \eqref{N} is the variational form for problem \eqref{pproblem_Bgrad}.

\begin{rem}
From the physical standpoint, the function $ {w}\in W^{1,p}(\Omega)$ solution of problem \eqref{N}, corresponds to the solution of a  {weighted}  $p-$Laplace problem in $B$ with region $A$ filled by a PEC.
\end{rem}

For the sequel, we need an upper bound for both the Dirichlet Energy density and the $L^p-$norm of the gradient of the solution $v^\lambda$ (in the outer region) of problem \eqref{G_norm}, as provided by the following Lemma.
\begin{lem}\label{primadiEtilde}
Let $1<p<+\infty$, $f\in X^p_\diamond(\partial \Omega)$, (A1), (A2) and (A3) hold and $ {w}\in W^{1,p}(\Omega)$ be the function achieving the minimum of the variational problem \eqref{N}.
Then, the solution $v^\lambda$ of \eqref{G_norm} satisfies  
\begin{equation}
\label{etildegrad}
\int_{B} |\nabla v^\lambda(x)|^p dx\leq \overline Q /\underline Q 
\left(\int_{B} |\nabla  {w} (x)|^p dx+ 
\frac{E^p_0}{\lambda^p} |B|\right)+\frac{ {E_0^p}}{\lambda^p} |B|
\end{equation}
and
\begin{equation}
\label{etildeQ}
\begin{split}
\mathbb G^\lambda (v^\lambda)
\leq
\frac{\overline Q}{E_0^p}\int_{B} |\nabla  {w} (x)|^p dx+\frac{\overline Q}{\lambda^p} |B|.
\end{split}
\end{equation}
\end{lem}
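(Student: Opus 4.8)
The plan is to exploit the minimality of $v^\lambda$ in \eqref{G_norm} by comparing $\mathbb G^\lambda(v^\lambda)$ against the value of the same functional at the minimizer $w$ of \eqref{N}, and then to sandwich $\mathbb G^\lambda(v^\lambda)$ between the two-sided growth estimates of (A3)$(i)$.

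First I would observe that $w$ is admissible for \eqref{G_norm}. By the constraints defining \eqref{N} we have $w\in W^{1,p}(\Omega)$ with $w=f$ on $\partial\Omega$, and since $W^{1,p}(\Omega)\subseteq W^{1,p}(\Omega)\cup W^{1,q}(\Omega)$, the function $w$ lies in the admissible class of \eqref{G_norm}. Hence minimality of $v^\lambda$ yields $\mathbb G^\lambda(v^\lambda)\le\mathbb G^\lambda(w)$. Next I would evaluate $\mathbb G^\lambda(w)$: because $|\nabla w|=0$ a.e. in $A$ and $Q_A(x,0)=0$ by (A2), the integral over $A$ vanishes, so only the $B$-contribution survives. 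Applying the upper growth bound $Q_B(x,E)\le\overline Q\big[(E/E_0)^p+1\big]$ from (A3)$(i)$ with $E=\lambda|\nabla w(x)|$ and dividing by $\lambda^p$ gives precisely $\mathbb G^\lambda(w)\le \frac{\overline Q}{E_0^p}\int_B|\nabla w|^p\,dx+\frac{\overline Q}{\lambda^p}|B|$; chaining with $\mathbb G^\lambda(v^\lambda)\le\mathbb G^\lambda(w)$ establishes \eqref{etildeQ}.

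To obtain \eqref{etildegrad} I would bound $\mathbb G^\lambda(v^\lambda)$ from below. Since $Q_A\ge 0$ by (A2), discarding the nonnegative $A$-term only decreases the functional, and the lower growth bound $Q_B(x,E)\ge\underline Q\big[(E/E_0)^p-1\big]$ from (A3)$(i)$, applied with $E=\lambda|\nabla v^\lambda(x)|$, yields $\mathbb G^\lambda(v^\lambda)\ge \frac{\underline Q}{E_0^p}\int_B|\nabla v^\lambda|^p\,dx-\frac{\underline Q}{\lambda^p}|B|$. Combining this with the upper estimate for $\mathbb G^\lambda(v^\lambda)$ from the previous step, then multiplying through by $E_0^p/\underline Q$ and collecting the remainder terms, produces exactly the bound \eqref{etildegrad}.

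The argument is a one-sided comparison, so there is no deep obstacle; the points that require care are the admissibility of $w$ as a competitor (so that $\mathbb G^\lambda(v^\lambda)\le\mathbb G^\lambda(w)$ is legitimate) and the bookkeeping of the two constants $\underline Q$ and $\overline Q$ in (A3) — the upper bound on $Q_B$ controls the energy of $w$ while the lower bound on $Q_B$ controls the gradient of $v^\lambda$, and the sign condition $Q_A\ge 0$ together with $Q_A(x,0)=0$ is what lets every estimate reduce to the $B$-gradient alone.
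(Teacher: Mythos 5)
Your proof is correct and follows essentially the same route as the paper: compare $\mathbb G^\lambda(v^\lambda)$ with $\mathbb G^\lambda(w)$ using $w$ as an admissible competitor (exploiting $|\nabla w|=0$ a.e.\ in $A$ and $Q_A(x,0)=0$), then apply the two-sided bounds of (A3) on $Q_B$. The only difference is organizational — the paper runs a single chain of inequalities yielding \eqref{etildegrad} first and extracts \eqref{etildeQ} as a byproduct, while you establish \eqref{etildeQ} first — but the ingredients and bookkeeping are identical.
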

\begin{proof}
Let us consider the two auxiliary functions 
\[
\begin{split}
    Q_1( E)=\underline  Q\left[\left(\frac{E}{E_0}\right)^p- {1}\right]\quad\forall E>0,\\
    Q_2(E)=\overline  Q\left[\left(\frac{E}{E_0}\right)^p+ {1}\right]\quad\forall E>0.
\end{split}
\]
In terms of $Q_1$ and $Q_2$, the assumption (A3) reads as
\begin{equation}
\label{chainQ}
Q_1(E)\le Q_B(x,E)\le Q_2(E) \quad \text{for a.e.} \  x\in B \ \text{and}\ \forall E>0.
\end{equation}
Therefore, we have
\begin{equation}
\label{proof_prelim_lemma}
\begin{split}
\int_{B} |\nabla v^\lambda(x)|^pdx-\frac{ {E_0^p}}{\lambda^p} |B|&=\frac{E_0^p}{\underline Q}\frac{1}{\lambda^p} \int_{B}Q_1(\lambda|\nabla v^\lambda (x)|)dx\\
&\leq\frac{E_0^p}{\underline Q}\frac{1}{\lambda^p}\int_{B} Q_B(x,\lambda|\nabla v^\lambda (x)|)dx\le\frac{E_0^p}{\underline Q}\mathbb G^\lambda( v^\lambda)\\
&\le \frac{E_0^p}{\underline Q}\mathbb G^\lambda( {w})= \frac{E_0^p}{\underline Q}\frac{1}{\lambda^p}\int_{B} Q_B(x, \lambda|\nabla  {w} (x)|) dx\\
& \le \frac{E_0^p}{\underline Q}\frac{1}{\lambda^p}\int_{B} Q_2( \lambda |\nabla  {w} (x)|) dx\\
& = \frac{\overline Q}{\underline Q}\int_{B} |\nabla  {w} (x)|^p dx+\frac{\overline Q}{\underline Q}\frac{ {E_0^p}}{\lambda^p} |B|,
\end{split}
\end{equation}
where in the first line we applied the definition of $Q_1$ when $E=\lambda\nabla v^\lambda$, in the second line we exploited the leftmost inequality \eqref{chainQ}, in the third line we applied  {$w$} as the test function for functional \eqref{G_norm} and  {exploited that $|\nabla w|=0$ a.e. in $A$}, in the fourth line we exploited the rightmost inequality (\ref{chainQ}), and in the fifth line we applied the definition of $Q_2$ when $E=\lambda\nabla  {w}$.

This concludes the proof of \eqref{etildegrad}.

 {Inequality \eqref{etildeQ} is a byproduct of \eqref{proof_prelim_lemma}, obtained by comparing the last term of the second line with the final term.}
\end{proof}

\subsection{First case: $\mathbf {q <p}$}
\label{Large_bigger}

For any fixed $f\in X_\diamond^p(\partial \Omega)$, we study the problem as $\lambda$ approaches infinity. Since $q<p$, we have $W^{1,p}(\Omega)\cup W^{1,q}(\Omega)= W^{1,q}(\Omega)$ and, hence, the variational problem \eqref{G_norm} particularizes as
\begin{equation}
\label{G^tlarge}
\min_{\substack{v\in W^{1,q}(\Omega)\\ v=f\ \text{on}\ \partial \Omega}}\mathbb G^\lambda(v).
\end{equation}

Here, we first show that $v^\lambda$ is weakly convergent in $W^{1,p}(B)$ to the solution of problem \eqref{Hinfty}. Then, we prove that $v^\lambda$ is weakly convergent in $W^{1,q}(A)$ to the solution of problem \eqref{Linfty}, under an additional assumption.

\begin{thm}
\label{Thm_conv_limlarge}
Let $1<q<p<+\infty$, $f\in X^p_\diamond(\partial \Omega)$ and $v^\lambda$ be the solution of \eqref{G^tlarge}. If (A1), (A2), (A3) and (A4) hold, then
\begin{itemize}
\item[] $v^\lambda\rightharpoonup v_B$ in $W^{1,p}(B)$, as $\lambda\to +\infty$,
\end{itemize}
where $v_B\in W^{1,p}(B)$
is the unique solution of
\eqref{Hinfty}.
\end{thm}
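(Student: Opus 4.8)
The plan is to argue by the direct method of the calculus of variations: I would show that every weak cluster point of the family $\{v^\lambda\}$ in $W^{1,p}(B)$ coincides with the unique minimizer $v_B$ of \eqref{Hinfty}, and then upgrade this subsequential statement to convergence of the whole family. To start, I would establish weak compactness. Inequality \eqref{etildegrad} of Lemma \ref{primadiEtilde} bounds $\int_B|\nabla v^\lambda(x)|^p\,dx$ uniformly for large $\lambda$ (the terms $E_0^p|B|/\lambda^p$ stay bounded), and, combined with the fixed Dirichlet datum $f$ and the Poincaré inequality on $B$, this controls the full $W^{1,p}(B)$-norm. Since $1<p<+\infty$, the space $W^{1,p}(B)$ is reflexive, so along any sequence $\lambda\to+\infty$ there is a subsequence with $v^{\lambda_k}\rightharpoonup v$ in $W^{1,p}(B)$. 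As the trace operator $W^{1,p}(B)\to L^p(\partial\Omega)$ is bounded and linear, hence weakly continuous, and $v^{\lambda_k}=f$ on $\partial\Omega$ for every $k$, the limit satisfies $v=f$ on $\partial\Omega$; thus $v$ is admissible for \eqref{Hinfty}.

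Next I would produce matching lower and upper energy bounds along this subsequence. For the lower bound I would additionally choose the subsequence so that it realizes $\liminf_{\lambda\to+\infty}\mathbb{G}^\lambda(v^\lambda)$, and then invoke Proposition \ref{fund_ine_prop} (applied along the subsequence) together with the nonnegativity of the $A$-contribution to $\mathbb{G}^\lambda$ coming from (A2); this gives
\[
\mathbb{B}(v)=\int_B\beta(x)|\nabla v(x)|^p\,dx\le\liminf_{k\to+\infty}\frac{1}{\lambda_k^p}\int_B Q_B(x,\lambda_k|\nabla v^{\lambda_k}(x)|)\,dx\le\liminf_{k\to+\infty}\mathbb{G}^{\lambda_k}(v^{\lambda_k}).
\]

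For the upper bound, the key is to manufacture an admissible competitor for \eqref{G^tlarge} out of $v_B$. Since $A$ has Lipschitz boundary and $v_B\in W^{1,p}(B)$, I would extend the trace of $v_B$ on $\partial A$ to some $\hat v\in W^{1,p}(A)$ and glue it to $v_B$, obtaining $\tilde v\in W^{1,p}(\Omega)\subset W^{1,q}(\Omega)$ (the inclusion holding because $q<p$ on the bounded set $A$) with $\tilde v=f$ on $\partial\Omega$. Minimality of $v^\lambda$ then yields $\mathbb{G}^\lambda(v^\lambda)\le\mathbb{G}^\lambda(\tilde v)$, and I would pass to the limit in the two pieces of $\mathbb{G}^\lambda(\tilde v)$: on $B$, assumption (A4) gives the pointwise limit $\lambda^{-p}Q_B(x,\lambda|\nabla v_B|)\to\beta(x)|\nabla v_B|^p$, while the upper bound in (A3)(i) supplies an integrable dominating function, so dominated convergence forces convergence to $\mathbb{B}(v_B)$; on $A$, the growth bound (A3)(ii) gives $\lambda^{-p}\int_A Q_A(x,\lambda|\nabla\hat v|)\,dx\le\overline{Q}\big(\lambda^{q-p}E_0^{-q}\int_A|\nabla\hat v|^q\,dx+\lambda^{-p}|A|\big)\to0$ precisely because $q<p$. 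Hence $\limsup_{\lambda\to+\infty}\mathbb{G}^\lambda(v^\lambda)\le\mathbb{B}(v_B)$. Chaining this with the previous display gives $\mathbb{B}(v)\le\mathbb{B}(v_B)$; since $v$ is admissible for \eqref{Hinfty} and $\mathbb{B}$ is strictly convex (so \eqref{Hinfty} has a unique minimizer), this forces $v=v_B$. As every sequence $\lambda\to+\infty$ thus admits a subsequence along which $v^\lambda\rightharpoonup v_B$, a standard subsequence argument upgrades this to $v^\lambda\rightharpoonup v_B$ for the whole family.

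I expect the main obstacle to be the upper bound: one must build a globally admissible competitor from $v_B$, which lives only on $B$, by a controlled extension into $A$, and then verify that the rescaled energy stored in $A$ is asymptotically negligible. This negligibility is exactly where $q<p$ is essential — it is the analytic counterpart of region $A$ degenerating into a perfect electric insulator — and it must be combined carefully with the fundamental inequality of Proposition \ref{fund_ine_prop}, whose hypotheses (weak $W^{1,p}(B)$-convergence and the $\liminf$-realizing sequence) dictate the order in which the subsequences are extracted.
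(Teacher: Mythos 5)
Your proposal is correct and follows essentially the same route as the paper: compactness via inequality \eqref{etildegrad} of Lemma \ref{primadiEtilde}, the lower bound via Proposition \ref{fund_ine_prop}, the upper bound via an extension $\tilde v_B\in W^{1,p}(\Omega)$ of $v_B$ used as competitor with dominated convergence on $B$ (from (A3), (A4)) and the vanishing of the $A$-term (from (A3.ii) and $q<p$), and identification of the limit by uniqueness of the minimizer of \eqref{Hinfty}. If anything, your explicit subsequence-of-subsequence argument to upgrade to convergence of the whole family is slightly more careful than the paper's closing sentence.
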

\begin{proof}
Let  {$w\in W^{1,p}(\Omega)$ be the minimizer} of problem \eqref{N}. From \eqref{etildegrad} of Lemma \ref{primadiEtilde}, we have
\begin{equation}
\label{limitatezza_v_lambda}
\int_{B} |\nabla v^\lambda(x)|^p dx\leq{\overline Q}/{\underline Q}\left(\int_{B} | {w} (x)|^p dx+\frac{ {E_0^p}}{\lambda^p} |B|\right)+\frac{ {E_0^p}}{\lambda^p} |B|<\infty.
\end{equation}
Since the last two integral terms tend to $0$ as $\lambda\to+\infty$, then $||\nabla v^\lambda||_{L^p(B)}^p$ is definitively upper bounded. Moreover, up to subsequences, since $v^\lambda=f$ on $\partial\Omega$, then \cite[Chap. 11]{leoni17} there exists $v^\infty\in W^{1,p}(B)$, with $v^\infty=f$, such that
\begin{equation}
\label{conv_vt_v_infty}
v^\lambda \rightharpoonup v^\infty\quad\text{in}\ W^{1,p}(B)\quad\text{as}\ \lambda\to+\infty.
\end{equation}

Let $\tilde v_B\in W^{1,p}(\Omega)$ be an extension of $v_B\in W^{1,p}(B)$. By using $\tilde v_B$ as the test function in \eqref{G^tlarge}, and from the rightmost inequality of assumption ( {A3}.ii), we have
\begin{equation}
    \label{duepezzi}
\mathbb{G}^\lambda(v^\lambda)\leq \mathbb{G}^\lambda(\tilde v_B)\leq \frac 1 {\lambda^p}\int_{B} Q_B(x,\lambda|\nabla v_B(x)|)dx+\frac{ \overline{Q}}{E_0^q}\frac 1 {\lambda^{p-q}}\int_{A} |\nabla \tilde v_B(x)|^qdx+\frac{ \overline{Q}} {\lambda^{p}}|A|.
\end{equation}
By using the Dominated Convergence Theorem together with (A3) and (A4) to treat the first term on the r.h.s. of \eqref{duepezzi}, and by taking into account that the second and third terms vanish, we have
\begin{equation}\label{caso3convupii}
\limsup_{\lambda\to+\infty}\mathbb{G}^\lambda(v^\lambda)\le\int_{B}\beta(x)|\nabla v_B(x)|^p dx=\mathbb B(v_B).
\end{equation}

Since $v^\lambda$ is weakly convergent (see \eqref{conv_vt_v_infty}), we can apply Proposition \ref{fund_ine_prop} which gives:
\begin{equation}
\label{caso3convdownii}
\mathbb B(v^\infty)\leq \liminf_{\lambda\to +\infty}\frac{1}{\lambda^p}\int_{B}Q_B(x,|\nabla v^\lambda(x)|)dx.
\end{equation}

Therefore, we have the following inequalities
\begin{equation}\label{chain_large_i}
    \begin{split}
\mathbb B(v_B)\leq \mathbb B(v^\infty)&\leq \liminf_{\lambda\to +\infty}\frac{1}{\lambda^p}\int_{B}Q_B(x,\lambda|\nabla v^\lambda(x)|)dx\\
&\leq\limsup_{\lambda\to+\infty}\mathbb{G}^\lambda(v^\lambda)\leq\lim_{\lambda\to+\infty}\mathbb{G}^\lambda(\tilde v_B)= \mathbb B(v_B),
    \end{split}
\end{equation}
where in the first inequality we have used $v^\infty$ as test function in $\mathbb B$, the second inequality is \eqref{caso3convdownii}, the third inequality is obtained by adding the integral of the Dirichlet Energy density in $A$, the fourth inequality  is obtained by using $\tilde v_B$ as the test function in $\mathbb G^\lambda$, and the last inequality is \eqref{caso3convupii}.

This implies that $\mathbb B(v_B)= \mathbb B(v^\infty)$ and $v_B=v^\infty$, because of the uniqueness of the solution of problem \eqref{Hinfty}. This result together with \eqref{conv_vt_v_infty}, gives the conclusion.
\end{proof}

\begin{rem}
From \eqref{chain_large_i}, we find that the fundamental inequality \eqref{caso3convdownii}, also stated in Proposition \ref{fund_ine_prop}, holds as an equality:
\begin{equation*}
\mathbb B(v^\infty)=\lim_{\lambda\to +\infty}\frac{1}{\lambda^p}\int_{B}Q_B(x,|\nabla v^\lambda(x)|)dx.
\end{equation*}
\end{rem}

The boundedness of $v^\lambda$ in $W^{1,p}(B)$ implies the boundedness of $Tr(v^\lambda)\in X^p(\partial A)$ and therefore $Tr(v^\lambda)\in X^q(\partial A)$. Then, by the Inverse Trace inequality in Besov spaces \cite[Th. 18.40]{leoni17}, we have the boundedness of $v^\lambda\in W^{1,q}(A)$. Then, up to a subsequence, there exists $v_A\in W^{1,q}(A)$ such that  $v^\lambda\rightharpoonup v_A$ in $W^{1,q}(A)$.

Indeed, we recall from Theorem \ref{Thm_conv_limlarge} that $v^\lambda\rightharpoonup v_B$ in $W^{1,p}(B)$ as $\lambda\to +\infty$; this implies that $v^\lambda\rightharpoonup v_B$ in $W^{1,q}(B)$. Therefore,  by \cite[Cor. 18.4]{leoni17}, we have $Tr(v^\lambda)\to Tr(v_B)=Tr(v_A)$ in $L^q(\partial A)$ as $\lambda\to +\infty$. 

However, to prove the desired result, we also need the stronger convergence
\[
||Tr(v^\lambda)-Tr(v_A)||_{X^q(\partial A)}\to 0^+,\quad \text{as }\ \lambda\to+\infty.
\]
This convergence holds when $\nabla v^\lambda\to\nabla v_B$ in $L^p(B)$, by using \cite[Th. 18.40]{leoni17}. This problem will be addressed in a forthcoming paper. Anyway, it seems interesting to state the following conditional results.

\begin{lem}
\label{lpconv2}
Let $1<q<p<+\infty$, if (A1), (A2), (A3) and (A4) 
hold, and $||Tr(v^\lambda)-Tr(v_A)||_{X^q(\partial A)}\to 0^+$, as $\lambda\to+\infty$, then there exist two positive constants $C$ and $\lambda_0$ such that $||\nabla v^\lambda||_{L^q(A)}\leq C$ for any $\lambda>\lambda_0$.
\end{lem}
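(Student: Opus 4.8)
The plan is to exploit the fact that the restriction $v^\lambda|_A$ inherits a local minimality property from the global minimizer $v^\lambda$ of \eqref{G^tlarge}, and to compare it against an extension of its own boundary trace furnished by the Inverse Trace inequality. A naive estimate based only on the global energy bound $\limsup_{\lambda}\mathbb G^\lambda(v^\lambda)\le\mathbb B(v_B)$ is useless here: since $q<p$, inserting the lower bound of (A3.ii) into the $A$-part of $\mathbb G^\lambda$ produces a factor $\lambda^{q-p}\to 0$, so controlling the vanishing quantity $\lambda^{-p}\int_A Q_A(x,\lambda|\nabla v^\lambda(x)|)\,dx$ yields no bound whatsoever on $\int_A|\nabla v^\lambda(x)|^q\,dx$. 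The correct object to compare is the \emph{unscaled} $A$-energy, and the hypothesis on the traces is precisely what keeps the comparison function under control.

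Concretely, I would first observe that, because $v^\lambda$ minimizes $\mathbb G^\lambda$ among all competitors equal to $f$ on $\partial\Omega$, its restriction to $A$ minimizes $\int_A Q_A(x,\lambda|\nabla\,\cdot\,|)\,dx$ among all $W^{1,q}(A)$ functions sharing the trace $g^\lambda:=Tr(v^\lambda)$ on $\partial A$: any competitor lowering the $A$-energy could be glued to $v^\lambda|_B$ across the Lipschitz interface $\partial A$ to produce an admissible function of strictly smaller total energy. Next, from the hypothesis $\|Tr(v^\lambda)-Tr(v_A)\|_{X^q(\partial A)}\to 0$ and the triangle inequality, the norms $\|g^\lambda\|_{X^q(\partial A)}$ are bounded, say by $M$, for $\lambda>\lambda_0$; the Inverse Trace inequality \cite[Th. 18.40]{leoni17} then supplies an extension $V^\lambda\in W^{1,q}(A)$ with $V^\lambda=g^\lambda$ on $\partial A$ and $\|\nabla V^\lambda\|_{L^q(A)}\le C_0 M$. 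Using $V^\lambda$ as the competitor in the local minimality and cancelling the common $B$-part gives $\int_A Q_A(x,\lambda|\nabla v^\lambda(x)|)\,dx\le\int_A Q_A(x,\lambda|\nabla V^\lambda(x)|)\,dx$. Finally, bounding the left side from below and the right side from above by the two inequalities in (A3.ii), the common factor $\lambda^q$ cancels and one is left with
\[
\int_A|\nabla v^\lambda(x)|^q\,dx\le \frac{\overline Q}{\underline Q}\,\|\nabla V^\lambda\|_{L^q(A)}^q+\frac{E_0^q(\overline Q+\underline Q)}{\underline Q\,\lambda^q}\,|A|\le \frac{\overline Q}{\underline Q}\,C_0^q M^q+o(1),
\]
which is the sought uniform bound $C$, valid for all $\lambda>\lambda_0$.

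I expect the main obstacle to be the rigorous justification of the gluing/localization step: one must verify that splicing $v^\lambda|_B\in W^{1,p}(B)\subset W^{1,q}(B)$ with a $W^{1,q}(A)$ competitor of equal trace indeed yields an element of $W^{1,q}(\Omega)$ admissible in \eqref{G^tlarge}, which relies on the Lipschitz regularity of $\partial A$ and on the agreement of the traces taken from the two sides. The remaining delicate point is the bookkeeping of the powers of $\lambda$: it is essential that the $\lambda^q$ coming from the homogeneity of the lower bound in (A3.ii) exactly matches the one from the upper bound, so that the leading terms cancel and only a residual $\lambda^{-q}$ term survives, vanishing as $\lambda\to+\infty$. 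This cancellation is exactly what upgrades a merely finite bound for each fixed $\lambda$ into one that is uniform for all $\lambda>\lambda_0$.
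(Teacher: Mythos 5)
Your proposal is correct and takes essentially the same approach as the paper's proof: local minimality of $v^\lambda$ restricted to $A$ among $W^{1,q}(A)$ competitors sharing its trace on $\partial A$, an extension supplied by the Inverse Trace inequality \cite[Th. 18.40]{leoni17} whose gradient norm is controlled by $\|Tr(v^\lambda)\|_{X^q(\partial A)}$ (bounded thanks to the trace-convergence hypothesis), and cancellation of the $\lambda^q$ homogeneity through the two-sided bounds of (A3.ii). The only difference is presentational: the paper invokes the localization implicitly (\lq\lq the minimality of $v^\lambda$\rq\rq), whereas you spell out the gluing argument across the Lipschitz interface $\partial A$.
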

\begin{proof}
For any $v\in v^\lambda+W^{1,q}_0(A)$, we have
\begin{equation}
\label{chain_p_en_i}
\begin{split}
\frac{\underline{Q}}{E_0^{ {q}}}\int_A |\nabla v^\lambda(x)|^q \text{d}x {\ - \frac{\ \underline{Q} |A|}{\lambda^q}}&\leq\frac{1}{\lambda^q}\int_A Q_A(x, {\lambda}|\nabla{v^\lambda}(x)|)\text{d}x\\
&\leq\frac{\overline{Q}}{E_0^{ {q}}} \int_A |\nabla v(x)|^q\text{d}x {\ + \frac{\ \overline{Q} |A|}{\lambda^q}}
\end{split}
\end{equation}
where in the first inequality we used assumption (A3), and in the second (A3) and the minimality of $v^\lambda$.

The Inverse Trace inequality in Besov spaces \cite[Th. 18.40]{leoni17} assures the existence of a function $g\in v^\lambda+ W^{1,q}_0(A)$ such that $||\nabla g||_{q}\leq K(q,\Omega) ||Tr(v^\lambda)||_{X^q(\partial A)}$. Therefore, from \eqref{chain_p_en_i} with $v=g$ and since $Tr(g)=Tr(v^\lambda)$ on $\partial A$, we have
\begin{equation}
\label{upp_vlambda}
||\nabla v^\lambda||^q_{q}
\leq  {\frac{\overline Q}{\underline Q}}||\nabla g||^q_{q} {\ +\ \frac{(\overline Q+\underline{Q})E_0^q |A|}{\underline Q \lambda^q}}
\le  {\frac{\overline Q}{\underline Q }K}||Tr(v^\lambda)||^q_{X^q(\partial A)} {\ +\ \frac{(\overline Q+\underline{Q})E_0^q |A|}{\underline Q \lambda^q}}.
\end{equation}
Hence, $||\nabla v^\lambda||^q_{L^q(A)}$ is definitively upper bounded because of the convergence of $Tr(v^\lambda)$.

\end{proof}


In order to identify $v_A$,  {before proving the inner convergence result, we introduce the following functional
\begin{equation}
\label{Ilambda_corpo}
\mathbb I^\lambda(v)=\frac{1}{\lambda^q}\int_A Q_A(x,\lambda|\nabla v(x)|)dx,
\end{equation}
and we observe that, for any $\lambda>0$, $v^\lambda$ is also the minimizer of
\[
\min_{\substack{v\in W^{1,q}(A)\\ v=v^\lambda \ \text{on } \partial A}}\mathbb I^\lambda(v).
\]}

\begin{thm}\label{thm_Large_inner}
Let $1<q<p<+\infty$, $f\in X^p_\diamond(\partial \Omega)$ and $v^\lambda$ be the solution of \eqref{G^tlarge}. If (A1), (A2), (A3),  (A4) and (A4') 
hold, and $||Tr(v^\lambda)-Tr(v_A)||_{X^q(\partial A)}\to 0^+$, as $\lambda\to+\infty$, then
\begin{itemize}
\item[] $v^\lambda {\rightharpoonup} v_A$ in $W^{1,q}(A)$, as $\lambda\to +\infty$,
\end{itemize}
where $v_A\in W^{1,q}(A)$
is the unique solution of
\eqref{Linfty}.
\end{thm}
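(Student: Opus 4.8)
The plan is to mirror the outer convergence argument of Theorem \ref{Thm_conv_limlarge}, but now working inside $A$ with exponent $q$, the weight $\alpha$ from (A4$'$), and the functional $\mathbb I^\lambda$ of \eqref{Ilambda_corpo}, exploiting that $v^\lambda$ minimizes $\mathbb I^\lambda$ among the competitors sharing its own trace on $\partial A$. First I would record that, thanks to the trace hypothesis, Lemma \ref{lpconv2} gives $||\nabla v^\lambda||_{L^q(A)}\le C$ for $\lambda>\lambda_0$; combined with the boundedness of $Tr(v^\lambda)$ this makes $\{v^\lambda\}$ bounded in $W^{1,q}(A)$, so along a subsequence $v^\lambda\rightharpoonup v^\infty$ in $W^{1,q}(A)$ for some $v^\infty\in W^{1,q}(A)$. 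Since the trace operator is weakly continuous and $Tr(v^\lambda)\to Tr(v_A)$ in $X^q(\partial A)$ by assumption, the limit satisfies $Tr(v^\infty)=Tr(v_A)=v_B$ on $\partial A$; hence $v^\infty$ is admissible for \eqref{Linfty} and $\mathbb A(v_A)\le\mathbb A(v^\infty)$.

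For the upper bound I would build a recovery competitor carrying the correct boundary value. By the Inverse Trace inequality \cite[Th. 18.40]{leoni17}, pick $\psi^\lambda\in W^{1,q}(A)$ with $Tr(\psi^\lambda)=Tr(v^\lambda)-Tr(v_A)$ on $\partial A$ and $||\nabla\psi^\lambda||_{L^q(A)}\le K\,||Tr(v^\lambda)-Tr(v_A)||_{X^q(\partial A)}\to 0$. Then $v_A+\psi^\lambda$ has trace $v^\lambda$ on $\partial A$, so by minimality $\mathbb I^\lambda(v^\lambda)\le\mathbb I^\lambda(v_A+\psi^\lambda)$. Writing $s_\lambda:=|\nabla(v_A+\psi^\lambda)|$, one has $s_\lambda\to|\nabla v_A|$ in $L^q(A)$ and, up to a further subsequence, a.e.; the identity $\tfrac{1}{\lambda^q}Q_A(x,\lambda s_\lambda)=s_\lambda^q\,\tfrac{Q_A(x,\lambda s_\lambda)}{(\lambda s_\lambda)^q}$ together with (A4$'$) (and $Q_A(x,0)=0$ where $|\nabla v_A|=0$) shows the integrand converges a.e. to $\alpha(x)|\nabla v_A(x)|^q$, while (A3.ii) dominates it by $\tfrac{\overline Q}{E_0^q}s_\lambda^q+\tfrac{\overline Q}{\lambda^q}$, whose integral converges to $\tfrac{\overline Q}{E_0^q}\int_A|\nabla v_A|^q$. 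A generalized dominated convergence theorem (with $\lambda$-dependent dominating functions) then yields $\limsup_{\lambda\to+\infty}\mathbb I^\lambda(v^\lambda)\le\lim_{\lambda\to+\infty}\mathbb I^\lambda(v_A+\psi^\lambda)=\mathbb A(v_A)$.

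For the lower bound I would invoke the inner counterpart of Proposition \ref{fund_ine_prop}, obtained by rerunning the proofs of Lemma \ref{lem_dis_fund_infty} and Proposition \ref{fund_ine_prop} with $B,p,\beta$ and (A4) replaced throughout by $A,q,\alpha$ and (A4$'$); this is legitimate because those arguments use only weak convergence in $W^{1,q}(A)$, assumptions (A3) and (A4$'$), and the existence of a subsequence realizing the relevant $\liminf$. Applied along the subsequence above, it gives $\mathbb A(v^\infty)\le\liminf_{\lambda\to+\infty}\mathbb I^\lambda(v^\lambda)$. Chaining the estimates,
\[
\mathbb A(v_A)\le\mathbb A(v^\infty)\le\liminf_{\lambda\to+\infty}\mathbb I^\lambda(v^\lambda)\le\limsup_{\lambda\to+\infty}\mathbb I^\lambda(v^\lambda)\le\mathbb A(v_A),
\]
so $\mathbb A(v^\infty)=\mathbb A(v_A)$ and, by uniqueness of the minimizer of \eqref{Linfty}, $v^\infty=v_A$. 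Finally, since every weakly convergent subsequence of the bounded family $\{v^\lambda\}\subset W^{1,q}(A)$ has the same limit $v_A$, the whole family converges, $v^\lambda\rightharpoonup v_A$ in $W^{1,q}(A)$.

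The main obstacle is the upper bound, and it is exactly what forces the auxiliary trace hypothesis: $\mathbb I^\lambda$ must be tested against functions matching the $\lambda$-dependent datum $Tr(v^\lambda)$ on $\partial A$, so one cannot simply use $v_A$ as competitor. The assumption $||Tr(v^\lambda)-Tr(v_A)||_{X^q(\partial A)}\to 0$, via the Inverse Trace inequality, is precisely what lets the boundary correction $\psi^\lambda$ vanish in $W^{1,q}(A)$ and keeps the recovery energy asymptotically equal to $\mathbb A(v_A)$; a secondary technical point is that the $\lambda$-dependence of the integrand rules out ordinary dominated convergence and calls for its variable-dominant version.
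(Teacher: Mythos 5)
Your proposal is correct, and its overall architecture coincides with the paper's proof: boundedness in $W^{1,q}(A)$ via Lemma \ref{lpconv2}, extraction of a weak subsequential limit $v^\infty$, a recovery competitor $v_A+\psi^\lambda$ built with the Inverse Trace inequality and compared with $v^\lambda$ through the minimality of $v^\lambda$ for $\mathbb I^\lambda$, the lower bound via Proposition \ref{fund_ine_prop} transposed to $A$, $q$, $\alpha$, (A4'), and the final identification $v^\infty=v_A$ by uniqueness. The one genuine divergence is in the upper-bound step, i.e. in showing $\limsup_{\lambda}\mathbb I^\lambda(v_A+\psi^\lambda)\le \mathbb A(v_A)$: the paper exploits the convexity of $\mathbb I^\lambda$ together with a scaling argument (writing $v_A+w^\lambda$ as a convex combination $t\,\frac{v_A}{t}+(1-t)\frac{w^\lambda}{1-t}$, using (A3.ii) on the correction term, observing that $t\liminf_\lambda\mathbb I^\lambda(v_A/t)=t^{1-q}\liminf_\lambda\mathbb I^\lambda(v_A)$, and letting $t\to 1^-$), which reduces everything to $\lim_\lambda\mathbb I^\lambda(v_A)=\mathbb A(v_A)$ by ordinary dominated convergence with a fixed argument $v_A$; you instead apply a generalized (Pratt-type) dominated convergence theorem directly to the $\lambda$-dependent integrand of $\mathbb I^\lambda(v_A+\psi^\lambda)$, dominated by $\frac{\overline Q}{E_0^q}|\nabla(v_A+\psi^\lambda)|^q+\frac{\overline Q}{\lambda^q}$. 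Both routes are sound: the paper's trick avoids any a.e.-convergence and subsequence bookkeeping for $\nabla\psi^\lambda$, while your route is more direct and elementary, at the price of invoking the variable-dominant convergence theorem and the sub-subsequence principle to pass from a.e. convergence along subsequences to the full limit. Two further points where you are actually more explicit than the paper: you verify that $Tr(v^\infty)=Tr(v_A)$ on $\partial A$ (so that $v^\infty$ is admissible in \eqref{Linfty}, justifying $\mathbb A(v_A)\le\mathbb A(v^\infty)$), and you close with the standard argument that uniqueness of all subsequential weak limits upgrades subsequential convergence to convergence of the whole family; the paper leaves both implicit.
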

\begin{proof}
The Inverse Trace inequality \cite[Th. 18.34]{leoni17} assures the existence of a function $w^\lambda\in Tr(v^\lambda-v_A)+ W^{1,q}_0(A)$ such that $||\nabla w^\lambda ||_{q}\leq K(q,\Omega) ||Tr(v^\lambda)-Tr(v_A)||_{X^q(\partial A)}$. Since, by assumption, the last quantity is infinitesimal, then $||\nabla w^\lambda ||_{q}\to 0$.

 {
For any $0<t<1$, let us consider the convex combination
\[
v_A+w^\lambda=t \frac{v_A
}{t}+(1-t)\frac{w^\lambda}{1-t};
\]
then, by the convexity of $\mathbb I^\lambda$, we have
\begin{equation}
\label{convex_comb_I}
\begin{split}
\mathbb I^\lambda(v_A+w^\lambda)&\le t\mathbb I^\lambda\left( \frac{v_A}{t}\right)+(1-t)\mathbb I^\lambda\left(\frac{w^\lambda}{1-t}\right)\\
&\leq t\mathbb I^\lambda\left( \frac{v_A}{t}\right)+(1-t)\overline Q\left(\int_A\frac{|\nabla w^\lambda(x)|^q}{(1-t)^qE_0^q}dx+\frac{|A|}{\lambda^q}\right).
\end{split}
\end{equation}
by assumption (A3.ii)
.}
 {By passing to the $\liminf$ in \eqref{convex_comb_I} and taking into account that $||\nabla w^\lambda ||_{q}\to 0$ as $\lambda\to+\infty$, we have
\[
\liminf_{\lambda\to +\infty} \mathbb I^\lambda(v_A
+w^\lambda)\le t\liminf_{\lambda\to + \infty}
\mathbb I^\lambda\left( \frac{v_A
}{t}\right)=t^{1-q}\liminf_{\lambda\to + \infty}
\mathbb I^\lambda\left(
v_A\right).
\]
Therefore, the limit as $t\to 1^-$ provides
\begin{equation}
\label{IiwIi}
\liminf_{\lambda\to +\infty} \mathbb I^\lambda(v_A
+w^\lambda)\le \liminf_{\lambda\to + \infty}
\mathbb I^\lambda\left(v_A
\right).
\end{equation}}


 {From Proposition \ref{lpconv2}, we know that $||\nabla v^\lambda||_{L^q(A)}$ is upper bounded and hence there exists $v^\infty \in W^{1,q}(A)$ such that $v^\lambda\rightharpoonup v^\infty$ in $ W^{1,q}(A)$.}

Hence, we have the following inequalities
\begin{equation}
\label{conv_vt_v_infty_A}
    \begin{split}
\mathbb A(v_A)\leq \mathbb A(v^\infty)&\leq \liminf_{\lambda\to+\infty}\mathbb{I}^\lambda(v^\lambda)\leq\liminf_{\lambda\to +\infty}\mathbb{I}^\lambda(v_A
+w^\lambda)\\
&\leq\liminf_{\lambda\to +\infty}\mathbb{I}^\lambda(v_A
)=\lim_{\lambda\to+\infty}\mathbb{I}^\lambda(v_A)= \mathbb A(v_A).
\end{split}
\end{equation}
where the first inequality follows from testing $v^\infty$ in $\mathbb A$, the second inequality follows from using the fundamental result of Proposition \ref{fund_ine_prop}, the third inequality accounts the minimality of $v^\lambda$ for $\mathbb I^\lambda$, the fourth inequality is \eqref{IiwIi}, 
and the last equalities follows from using the dominated convergence theorem together with (A3) and (A4'). Proposition \ref{fund_ine_prop} was applied in the second inequality for $B$, $p$, $\beta$ and (A4) replaced by $A$, $q$, $\alpha$ and (A4'), respectively.

Equation \eqref{conv_vt_v_infty_A} implies that $\mathbb A(v_A)= \mathbb A(v^\infty)$ and $v_A=v^\infty$, by the uniqueness of the solution of problem \eqref{Hinfty}.
\end{proof}

As a corollary, we have a global convergence result.

\begin{cor}
Let $1<q<p<+\infty$, $f\in X^p_\diamond(\partial \Omega)$ and $v^\lambda$ be the solution of \eqref{G^tlarge}. If (A1), (A2), (A3), (A4) and (A4')
hold, and $||Tr(v^\lambda)-v_A||_{X^q(\partial A)}\to 0^+$, as $\lambda\to+\infty$, then
\begin{itemize}
\item[] $v^\lambda\rightharpoonup v_\Omega$ in $W^{1,q}(\Omega)$, as $\lambda\to +\infty$,
\end{itemize}
where $v_\Omega\in W^{1,q}(\Omega)$
is defined by
\begin{equation*}
v_\Omega=
\begin{cases}
v_B\quad\text{in }B\\
v_A\quad\text{in }A,
\end{cases}
\end{equation*}
$v_B\in W^{1,p}(B)$
is the unique solution of
\eqref{Hinfty}, $v_A\in W^{1,q}(A)$
is the unique solution of
\eqref{Linfty}.
\end{cor}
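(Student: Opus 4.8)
The plan is to assemble the two separate convergence results, Theorem \ref{Thm_conv_limlarge} on $B$ and Theorem \ref{thm_Large_inner} on $A$, into a single weak convergence statement on $\Omega$. First I would recall that Theorem \ref{Thm_conv_limlarge} gives $v^\lambda\rightharpoonup v_B$ in $W^{1,p}(B)$; since $q<p$ and $B$ is bounded, the continuous embedding $W^{1,p}(B)\hookrightarrow W^{1,q}(B)$ upgrades this to $v^\lambda\rightharpoonup v_B$ in $W^{1,q}(B)$. On the inner region, Theorem \ref{thm_Large_inner} directly gives $v^\lambda\rightharpoonup v_A$ in $W^{1,q}(A)$. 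Thus both subdomain limits are expressed in the common exponent $q$, which is the natural ambient integrability here because $W^{1,p}(\Omega)\cup W^{1,q}(\Omega)=W^{1,q}(\Omega)$ when $q<p$.

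The next step is to check that the piecewise function $v_\Omega$ is a legitimate element of $W^{1,q}(\Omega)$. The ingredients $v_B\in W^{1,p}(B)\subset W^{1,q}(B)$ and $v_A\in W^{1,q}(A)$ have matching traces on the interface: as already observed before Lemma \ref{lpconv2}, $Tr(v^\lambda)\to Tr(v_B)$ and $Tr(v^\lambda)\to Tr(v_A)$ in $L^q(\partial A)$, whence $Tr(v_B)=Tr(v_A)$ on $\partial A$. The standard gluing lemma for Sobolev functions across a Lipschitz interface then yields $v_\Omega\in W^{1,q}(\Omega)$, with $\nabla v_\Omega$ equal to $\nabla v_B$ on $B$ and to $\nabla v_A$ on $A$ and no singular part concentrated on $\partial A$, precisely because the traces coincide.

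Finally I would deduce the global weak convergence. The $W^{1,q}(\Omega)$-norm of $v^\lambda$ splits as the sum of its norms over $B$ and $A$: the former is bounded because $v^\lambda$ is weakly convergent, hence bounded, in $W^{1,p}(B)\hookrightarrow W^{1,q}(B)$ by Theorem \ref{Thm_conv_limlarge}, while the latter is bounded by Lemma \ref{lpconv2}. Thus $\{v^\lambda\}$ is bounded in $W^{1,q}(\Omega)$, so from any subsequence one can extract a further subsequence converging weakly in $W^{1,q}(\Omega)$ to some limit $\tilde v$. Because the restriction operators $W^{1,q}(\Omega)\to W^{1,q}(B)$ and $W^{1,q}(\Omega)\to W^{1,q}(A)$ are weakly continuous, $\tilde v|_B$ and $\tilde v|_A$ are the weak limits of $v^\lambda|_B$ and $v^\lambda|_A$, which by the first two steps are $v_B$ and $v_A$; hence $\tilde v=v_\Omega$. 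Since $v_\Omega$ is independent of the chosen subsequence, the whole family converges, $v^\lambda\rightharpoonup v_\Omega$ in $W^{1,q}(\Omega)$.

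I expect the only genuinely delicate point to be the gluing step: one must verify that matching $L^q$-traces on $\partial A$ really prevent the appearance of a distributional jump in $\nabla v_\Omega$ across the interface, so that membership in $W^{1,q}(\Omega)$ holds with the expected piecewise gradient. Everything else is a routine combination of the embedding $W^{1,p}(B)\hookrightarrow W^{1,q}(B)$, the uniform bounds already established, and the uniqueness of weak subsequential limits.
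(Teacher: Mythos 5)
Your proof is correct and follows exactly the route the paper intends: the corollary is stated there without proof, as an immediate combination of Theorem \ref{Thm_conv_limlarge} (weak convergence to $v_B$ in $B$) and Theorem \ref{thm_Large_inner} (weak convergence to $v_A$ in $A$), which is precisely what you assemble. The details you supply---the embedding $W^{1,p}(B)\hookrightarrow W^{1,q}(B)$, the matching of traces on $\partial A$ (which in fact holds automatically, since $v_A$ solves \eqref{Linfty} with boundary datum $v_B$), the gluing into $W^{1,q}(\Omega)$, the boundedness from Lemma \ref{lpconv2}, and the uniqueness-of-weak-subsequential-limits argument---are all sound.
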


\subsection{Second case: $\mathbf{p<q}$
}
\label{Large_smaller}
For any fixed $f\in X_\diamond^p(\partial \Omega)$, since $p<q$, we have $W^{1,p}(\Omega)\cup W^{1,q}(\Omega)= W^{1,p}(\Omega)$ and, hence, the variational problem \eqref{G_norm} particularizes as
\begin{equation}\label{G^tlargeii}
\min_{\substack{v\in W^{1,p}(\Omega)\\ v=f\ \text{on}\ \partial \Omega}}\mathbb G^\lambda(v)
.
\end{equation}
We denote by $ {w}$ the solution of the limiting problem defined in \eqref{N}. Problem \eqref{N} is relevant because $v^\lambda$  {weakly converges to $w$} for $\lambda \to +\infty$.

We observe that the condition $|\nabla v|=0$ in \eqref{N} is equivalent to saying that $v$ is constant on each connected component of $A$. In other words, region $A$ behaves as a perfect electric conductor and, in addition, region $B$ corresponds to a $p-$Laplacian modelled material.

\begin{thm}\label{Thm_conv_limlargeii}
Let $1<p<q<+\infty$, $f\in X^p_\diamond(\partial \Omega)$ and $v^\lambda$ be the solution of \eqref{G^tlargeii}. If (A1), (A2), (A3) and (A4) hold, then
\begin{itemize}
\item[
] $v^\lambda\rightharpoonup  {w}$ in $W^{1,p}(\Omega)$, as $\lambda\to +\infty$,
\end{itemize}
where $ {w}\in W^{1,p}(\Omega)$
is the unique solution of
\eqref{N}. 
Moreover, we have 
\begin{itemize}
    \item[] $v^\lambda\to  {w}$ in $W^{1,q}(A)$, as $\lambda\to +\infty$.
\end{itemize}
\end{thm}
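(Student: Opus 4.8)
The plan is to run a $\Gamma$-convergence--type sandwich on the functionals $\mathbb G^\lambda$, exactly as in the proof of Theorem \ref{Thm_conv_limlarge}, but with one crucial new ingredient: one must show that the gradient of $v^\lambda$ is forced to vanish in $A$ in the limit, so that the constraint $|\nabla w|=0$ a.e. in $A$ of problem \eqref{N} is recovered and region $A$ degenerates into a perfect electric conductor.

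First I would collect uniform bounds. Since $|\nabla w|=0$ a.e. in $A$ and $Q_A(x,0)=0$ by (A2), testing \eqref{G^tlargeii} with $w$ and invoking \eqref{etildeQ} of Lemma \ref{primadiEtilde} gives $\mathbb G^\lambda(v^\lambda)\le \mathbb G^\lambda(w)\le C$ uniformly for $\lambda\ge 1$, with $C$ independent of $\lambda$. The decisive step is the estimate in $A$: since $Q_A,Q_B\ge 0$, the $A$-part of $\mathbb G^\lambda(v^\lambda)$ is itself bounded by $C$, and the lower bound in (A3.ii) yields
\[
\frac{\underline Q}{E_0^{q}}\,\lambda^{q-p}\int_A |\nabla v^\lambda(x)|^q\,dx-\frac{\underline Q}{\lambda^p}|A|\le \frac{1}{\lambda^p}\int_A Q_A(x,\lambda|\nabla v^\lambda(x)|)\,dx\le C.
\]
Because $q>p$, the factor $\lambda^{q-p}\to+\infty$, so $\int_A |\nabla v^\lambda|^q\,dx=O(\lambda^{p-q})\to 0$, and hence (by H\"older, with $A$ bounded and $q>p$) also $\int_A |\nabla v^\lambda|^p\,dx\to 0$. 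Together with \eqref{etildegrad} this bounds $\|\nabla v^\lambda\|_{L^p(\Omega)}$ uniformly; since $v^\lambda=f$ on $\partial\Omega$, up to a subsequence $v^\lambda\rightharpoonup v^\infty$ in $W^{1,p}(\Omega)$ with $v^\infty=f$ on $\partial\Omega$. Moreover $\nabla v^\lambda\to 0$ strongly in $L^p(A)$, so its weak limit $\nabla v^\infty|_A$ is zero, i.e. $|\nabla v^\infty|=0$ a.e. in $A$; thus $v^\infty$ is admissible for \eqref{N}.

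Next I would close the sandwich on $\mathbb B$. For the $\liminf$ inequality, restricting to $B$ gives $v^\lambda\rightharpoonup v^\infty$ in $W^{1,p}(B)$, so Proposition \ref{fund_ine_prop} yields $\mathbb B(v^\infty)\le\liminf_{\lambda\to+\infty}\frac{1}{\lambda^p}\int_B Q_B(x,\lambda|\nabla v^\lambda|)\,dx\le\liminf_{\lambda\to+\infty}\mathbb G^\lambda(v^\lambda)$. For the $\limsup$ inequality I test again with $w$: since the $A$-part vanishes, $\mathbb G^\lambda(w)=\frac{1}{\lambda^p}\int_B Q_B(x,\lambda|\nabla w|)\,dx$, and (A4) gives the pointwise limit $\lambda^{-p}Q_B(x,\lambda|\nabla w|)\to\beta(x)|\nabla w|^p$, dominated for $\lambda\ge1$ by the integrable function $\frac{\overline Q}{E_0^p}|\nabla w|^p+\overline Q$ via (A3.i); dominated convergence then gives $\mathbb G^\lambda(w)\to\mathbb B(w)$. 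Chaining,
\[
\mathbb B(w)\le\mathbb B(v^\infty)\le\liminf_{\lambda\to+\infty}\mathbb G^\lambda(v^\lambda)\le\limsup_{\lambda\to+\infty}\mathbb G^\lambda(v^\lambda)\le\lim_{\lambda\to+\infty}\mathbb G^\lambda(w)=\mathbb B(w),
\]
where the first inequality uses that $v^\infty$ is admissible for \eqref{N} and $w$ is its minimizer. Hence $\mathbb B(v^\infty)=\mathbb B(w)$, and uniqueness of the solution of \eqref{N} forces $v^\infty=w$. Since every subsequence admits a further subsequence with this same limit, the whole family converges: $v^\lambda\rightharpoonup w$ in $W^{1,p}(\Omega)$.

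Finally, for the strong convergence in $W^{1,q}(A)$: since $|\nabla w|=0$ a.e. in $A$, the gradient part is immediate from $\|\nabla v^\lambda\|_{L^q(A)}\to 0$ established above, giving $\nabla v^\lambda\to\nabla w=0$ in $L^q(A)$. For the zeroth-order part, $w$ is constant on each connected component $A_i$ of $A$; applying the Poincar\'e--Wirtinger inequality on $A_i$ controls $\|v^\lambda-\langle v^\lambda\rangle_{A_i}\|_{L^q(A_i)}$ by $\|\nabla v^\lambda\|_{L^q(A_i)}\to0$, while the compact embedding $W^{1,p}(\Omega)\hookrightarrow L^p(\Omega)$ gives $v^\lambda\to w$ in $L^p(A)$, whence $\langle v^\lambda\rangle_{A_i}\to\langle w\rangle_{A_i}=w|_{A_i}$; combining these yields $v^\lambda\to w$ in $L^q(A)$, and therefore in $W^{1,q}(A)$. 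I expect the main obstacle to be precisely the $A$-gradient estimate: the mechanism by which $A$ becomes a PEC rests on exploiting the mismatch $q>p$ through the blow-up of $\lambda^{q-p}$, and one must ensure the uniform bound on $\mathbb G^\lambda(v^\lambda)$ (which itself needs $|\nabla w|=0$ in $A$ together with $Q_A(x,0)=0$) is in place before extracting it. The remaining points—promoting subsequential to full convergence and upgrading the $A$-convergence from $L^p$ to $L^q$—are routine once $\|\nabla v^\lambda\|_{L^q(A)}\to0$ is known.
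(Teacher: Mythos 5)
Your proposal is correct and follows essentially the same route as the paper's proof: uniform bounds via Lemma \ref{primadiEtilde}, the blow-up of $\lambda^{q-p}$ under (A3.ii) forcing $\int_A|\nabla v^\lambda|^q\,dx=O(\lambda^{p-q})$, weak compactness in $W^{1,p}(\Omega)$, Proposition \ref{fund_ine_prop} for the $\liminf$ bound, dominated convergence with $w$ as the recovery function, and uniqueness for problem \eqref{N} to identify the limit. In fact you are somewhat more careful than the paper in two spots it glosses over — the Poincar\'e--Wirtinger plus compact-embedding argument for the zeroth-order part of the strong $W^{1,q}(A)$ convergence (with several connected components of $A$, whereas the paper treats only one), and the explicit promotion of subsequential to full-family convergence.
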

\begin{proof} For the sake of simplicity, we will only treat the case when $A$ has one connected component. 

Let $ {w}\in W^{1,p}(\Omega)$ the solution of \eqref{N}. By \eqref{etildegrad} of Lemma \ref{primadiEtilde}, we have
\begin{equation}
\label{chainGtvtwOmega4}
\int_{B} |\nabla v^\lambda(x)|^p dx\leq\frac{\overline Q}{\underline Q}\left(\int_{B} |\nabla  {w} (x)|^p dx+\frac{ {E_0^p}}{\lambda^p} |B|\right)+\frac{ {E_0^p}}{\lambda^p} |B|.
\end{equation}
Hence the left hand side in \eqref{chainGtvtwOmega4} is upper bounded. 
Moreover, we have
\begin{equation}
\label{chainGtvtwA}
\begin{split}
\lambda^{q-p}\int_{A}|\nabla v^\lambda(x)|^qdx  &\leq\frac{ {E_0^q}}{\lambda^p} |A|+\frac{E_0^q}{\underline Q \lambda^p}\int_{A} Q_A(x,|\lambda\nabla v^\lambda(x)|)dx\\
&\leq\frac{ {E_0^q}}{\lambda^p} |A|+\frac{E_0^q}{\underline Q} \mathbb G^\lambda(v^\lambda)\\
&\leq\frac{ {E_0^q}}{\lambda^p} |A|+\frac{\overline{Q}}{\underline Q}E_0^{q-p}\int_{B} |\nabla  {w}(x)|^pdx+\frac{ \overline{Q}}{\underline Q }\frac{{  {E_0^q}}}{\lambda^{ {p}}} |B|.
    \end{split}
\end{equation}
where in the first inequality we used the left-hand side of assumption (A3.ii), in the second inequality we used the definition of $\mathbb G^\lambda$ and in the third inequality we exploited \eqref{etildeQ} of Lemma \ref{primadiEtilde}.

From \eqref{chainGtvtwOmega4} and \eqref{chainGtvtwA}, it is clear that $v^\lambda\in W^{1,p}(\Omega)\cap W^{1,q}(A)$ and that $||v^\lambda||_{W^{1,p}(\Omega)}$ is upper bounded. 
Therefore, taking into account that $v^\lambda=f$ on $\partial\Omega$ for any $\lambda>0$, 
there exists a function $v^\infty\in W^{1,p}(\Omega)$ such that $v^\lambda\rightharpoonup v^\infty$ in $W^{1,p}(\Omega)$, up to a subsequence, with $v^\infty=f$ on $\partial\Omega$.

Finally, from \eqref{chainGtvtwA}, we find that 
$\int_{A}|\nabla v^\lambda(x)|^qdx=O(\lambda^{p-q})$. 
Therefore, $v^\lambda\to v^\infty$ in $W^{1, {q}}(A)$ and $v^\infty$ is constant in $A$ because $\nabla v^\lambda \to 0$ in  {$L^{q}(A)$}.

Since assumption (A4) holds, then we are in position to apply Proposition \ref{fund_ine_prop} which gives
\begin{equation}
\label{fundamental_inequality_inftyii}
\mathbb B(v^\infty)\le
\liminf_{\lambda\to +\infty}\frac 1 {\lambda^p}\int_{B} Q_B(x,\lambda|\nabla v^\lambda(x)|)dx.
\end{equation}

Consequently, we have the following inequalities
\begin{equation}\label{chain_large_ii}
    \begin{split}
\mathbb B(v_\Omega)\le\mathbb B(v^\infty)&\le\liminf_{\lambda\to +\infty}\frac 1 {\lambda^p}\int_{B} Q_B(x,|\lambda\nabla v^\lambda(x)|)dx\\
&\le \liminf_{\lambda\to +\infty}\mathbb G^\lambda(v^\lambda) \le \lim_{\lambda\to +\infty}\mathbb{G}^\lambda( {w})=\ \mathbb B( {w}),
\end{split}
\end{equation}
where in the first inequality we used $v^\infty$ as the test function in $\mathbb B$, in the second inequality we used \eqref{fundamental_inequality_inftyii}, in the third inequality we used the definition of $\mathbb G^\lambda$, in the fourth inequality we used $w$ as the test function in $\mathbb G^\lambda$, and in the  {equality} we used assumption (A4) and the dominated convergence Theorem.

This implies that $\mathbb B( {w})=\mathbb B(v^\infty)$ and hence $v^\infty= {w}$ by the uniqueness of the solution of problem \eqref{N}.
\end{proof}

\begin{rem}
From \eqref{chain_large_ii}, we find that the fundamental inequality \eqref{fundamental_inequality_inftyii}, also stated in Proposition \ref{fund_ine_prop}, holds as an equality:
\[
\mathbb B(v^\infty)=
\liminf_{\lambda\to +\infty}\frac 1 {\lambda^p}\int_{B} Q_B(x,\lambda|\nabla v^\lambda(x)|)dx.
\]
\end{rem}

\section{The pointwise convergence assumptions in the limiting case}
\label{counter_sec}
The main aim of this Section is to prove that assumption (A4) for large Dirichlet data is sharp. Specifically, we provide a counterexample where (A4) does not hold, while (A1), (A2) and (A3) still hold. For this case, we prove that, for the Dirichlet energy \eqref{G_norm}, the convergence results of Theorems \ref{Thm_conv_limlarge} 
and \ref{Thm_conv_limlargeii} (and hence of Theorem \ref{thm_Large_inner}) do not hold.

 {With a similar approach, not reported here for the sake of brevity, it is possible to prove that even assumption (A4') is sharp.}

In order to prove this result, we need to build a Dirichlet energy function $Q_B$ such that the ratio $Q_B(x,E)/E^p$ does not admit the limit for $E \to +\infty$. This is the case when the ratio $Q_B(x,E)/E^p$ oscillates between two different values. In the following Lemma we prove that a Dirichlet Energy density of this type exists (see Figure \ref{fig_8_counterlarge} for the geometrical interpretation).

\begin{figure}
    \centering
    \includegraphics[width=0.6\textwidth]{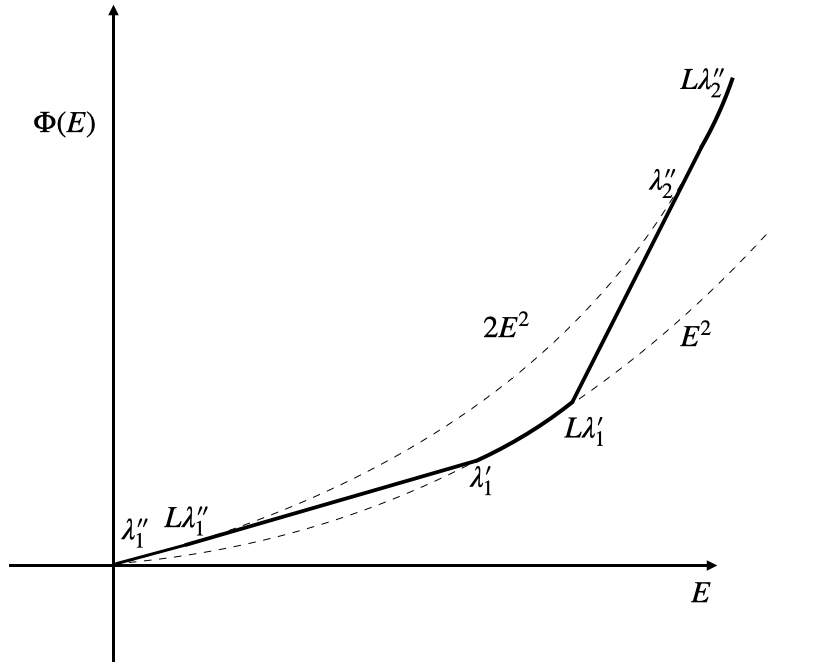}
    \caption{The continuous line represents the function $\Phi$ for the counter example. It gives the Dirichlet Energy density as $\Psi(E)=\Phi(E)+E^2$.}
    \label{fig_8_counterlarge}
\end{figure}

\begin{lem}
\label{succ_Llarge}
Let $L>1$, then there exist two sequences
\[
\lambda_n'\uparrow +\infty \quad\text{and}\quad \lambda_n''\uparrow +\infty
\]
such that
\[
L \lambda''_n<\lambda'_{n},\ \  L\lambda'_{n}<\lambda''_{n+1}
\quad\forall n\in\N,
\]
and a strictly convex function $\Psi:[0,+\infty[\to[0,+\infty[$  such that
\[
\Psi|_{[\lambda_n',L \lambda_n']}(E)=2E^2\quad \Psi|_{[\lambda_n'',L \lambda_n'']}(E)=3E^2.
\]
\end{lem}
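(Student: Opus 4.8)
The plan is to build the function $\Psi$ and the two sequences simultaneously. On the prescribed ``plateau'' intervals I simply set $\Psi(E)=2E^2$ on each $[\lambda_n',L\lambda_n']$ and $\Psi(E)=3E^2$ on each $[\lambda_n'',L\lambda_n'']$, and I fill every gap between two consecutive plateaus by a strictly convex $C^1$ piece. Since $\frac{d}{dE}(2E^2)=4E$ and $\frac{d}{dE}(3E^2)=6E$, the derivative $\Psi'$ must rise continuously from the right-endpoint slope of one plateau to the left-endpoint slope of the next. Because a $C^1$ function is strictly convex exactly when $\Psi'$ is strictly increasing, the whole content of the lemma is to arrange the two sequences so that this gluing is possible with $\Psi'$ globally strictly increasing. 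I will order the plateaus as $\dots,[\lambda_n'',L\lambda_n''],[\lambda_n',L\lambda_n'],[\lambda_{n+1}'',L\lambda_{n+1}''],\dots$, which matches the two requested inequalities $L\lambda_n''<\lambda_n'$ and $L\lambda_n'<\lambda_{n+1}''$.

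First I would isolate the elementary interpolation fact: given $c<d$, prescribed endpoint values $v_0,v_1$ and slopes $s_0<s_1$, there exists a strictly convex $C^1$ function on $[c,d]$ with $\Psi(c)=v_0$, $\Psi(d)=v_1$, $\Psi'(c)=s_0$, $\Psi'(d)=s_1$ if and only if the secant slope lies strictly between the endpoint slopes, i.e. $s_0<\frac{v_1-v_0}{d-c}<s_1$. For the ``if'' direction I would exhibit $\Psi'$ explicitly, taking $\Psi'(t)=s_0+(s_1-s_0)\bigl(\frac{t-c}{d-c}\bigr)^{\theta}$, which is continuous, strictly increasing, and has the correct endpoint slopes for every $\theta>0$; its integral over $[c,d]$ equals $(d-c)\bigl[s_0+\frac{s_1-s_0}{\theta+1}\bigr]$, a quantity depending continuously and monotonically on $\theta$ and sweeping the whole interval $\bigl((d-c)s_0,(d-c)s_1\bigr)$, so the secant condition lets me pick $\theta$ with $\int_c^d\Psi'=v_1-v_0$. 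Setting $\Psi(t)=v_0+\int_c^t\Psi'$ then gives the desired piece.

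Next I would translate the secant criterion into size conditions on the sequences. For the gap $[L\lambda_n'',\lambda_n']$ (double-prime plateau on the left, single-prime on the right) the data are $v_0=3(L\lambda_n'')^2$, $v_1=2(\lambda_n')^2$, $s_0=6L\lambda_n''$, $s_1=4\lambda_n'$; writing $r=\lambda_n'/(L\lambda_n'')$, the secant condition collapses to $2r^2-6r+3>0$, that is $r>\frac{3+\sqrt3}{2}$ (the other inequality has negative discriminant and holds automatically). For the gap $[L\lambda_n',\lambda_{n+1}'']$ the data are $v_0=2(L\lambda_n')^2$, $v_1=3(\lambda_{n+1}'')^2$, $s_0=4L\lambda_n'$, $s_1=6\lambda_{n+1}''$; writing $\rho=\lambda_{n+1}''/(L\lambda_n')$ it collapses to $3\rho^2-6\rho+2>0$, that is $\rho>1+\frac{1}{\sqrt3}$. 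Both thresholds exceed $1$, so I would fix $\lambda_1''$ and define the sequences recursively by choosing any $\lambda_n'>\frac{3+\sqrt3}{2}\,L\lambda_n''$ and any $\lambda_{n+1}''>\bigl(1+\frac{1}{\sqrt3}\bigr)L\lambda_n'$. This forces $L\lambda_n''<\lambda_n'$ and $L\lambda_n'<\lambda_{n+1}''$ and makes both sequences grow geometrically, hence diverge to $+\infty$. Finally I assemble $\Psi$ by putting $\Psi(E)=3E^2$ on the initial segment $[0,L\lambda_1'']$, the two quadratics on the plateaus, and the interpolants on the gaps; global strict convexity follows because $\Psi'$ is continuous, coincides with the strictly increasing lines $4E$ and $6E$ on the plateaus, and is strictly increasing across every gap by construction.

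The hard part is the double-prime-to-single-prime transition: there $\Psi'$ must keep increasing even though it moves from the steeper line $6E$ down to the flatter line $4E$, which is only possible if the rightward shift more than compensates, i.e. $4\lambda_n'>6L\lambda_n''$. This is precisely why the separation must be quantitatively stronger than the bare ordering $L\lambda_n''<\lambda_n'$, and the needed strengthening is exactly the threshold $\frac{3+\sqrt3}{2}$ produced by the discriminant computation above; the remaining transition and the value-matching are then routine consequences of the secant criterion.
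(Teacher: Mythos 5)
Your proof is correct, but it takes a genuinely different route from the paper's. The paper never interpolates with strictly convex arcs: it first builds an auxiliary function $\Phi$ whose plateaus are $2E^2$ (on the double-prime intervals) and $E^2$ (on the single-prime intervals), and fills each gap with a \emph{straight segment tangent to} $2E^2$ --- tangency makes convexity of the glued $\Phi$ automatic with no secant computation, but it also pins the sequences down to exact geometric progressions, $\lambda_n'=(2+\sqrt{2})L\lambda_n''$ and $\lambda_{n+1}''=\bigl(1+\tfrac{\sqrt{2}}{2}\bigr)L\lambda_n'$; since line segments are only convex, not strictly convex, the paper then sets $\Psi=\Phi+E^2$, which restores strict convexity and simultaneously shifts the plateau coefficients to the required $3$ and $2$. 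You instead build $\Psi$ directly, via a $C^1$ strictly convex interpolation lemma governed by the secant criterion $s_0<\frac{v_1-v_0}{d-c}<s_1$, and your discriminant computations identify explicit thresholds $\lambda_n'>\frac{3+\sqrt{3}}{2}\,L\lambda_n''$ and $\lambda_{n+1}''>\bigl(1+\tfrac{1}{\sqrt{3}}\bigr)L\lambda_n'$ under which the gluing is possible. What each buys: the paper's tangent-line-plus-$E^2$ trick is shorter and computation-free, at the price of rigidity (one specific pair of sequences, and a $\Psi$ with corners, which the lemma permits since it only demands strict convexity); your argument costs two quadratic discriminants but yields a sharper and more flexible statement --- any sequences above your thresholds work, the genuine obstruction (the steep-to-flat transition from slope $6E$ to slope $4E$) is isolated exactly, and your $\Psi$ is moreover $C^1$. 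As a consistency check, the paper's ratios $2+\sqrt{2}$ and $1+\tfrac{\sqrt{2}}{2}$ do exceed your thresholds $\frac{3+\sqrt{3}}{2}$ and $1+\tfrac{1}{\sqrt{3}}$, as they must, since the non-strict secant inequalities are necessary for any convex gluing, $C^1$ or not.
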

\begin{proof}
Let us fix $\lambda_1''>0$.
For each $n \in \mathbb{N}$ we set the auxiliary function $\Phi$ equal to $2 E^2$ in $(\lambda_n'', L \lambda_n'')$ and equal to $E^2$ in $(\lambda_n', L \lambda_n')$. In interval $(L \lambda_n'',\lambda_n')$ the function $\Phi$ is equal to the tangent line to function $2E^2$ evaluated at $L \lambda_n''$. 
Point $\lambda_n'$ is found at the intersection of this tangent line with function $E^2$. In interval $(L \lambda_n',\lambda_{n+1}'')$ the function $\Phi$ is a straight line, continuous at $L \lambda_n'$ and tangent to $2E^2$.
Point $\lambda_{n+1}''$ is found as the abscissa of the tangent point between this straight line and function $2E^2$. This procedure is applied iteratively from $n=1$. Function $\Phi$ turns out to be convex and sequences $\{\lambda_n'\}_{n \in \mathbb{N}}$ and $\{\lambda_n''\}_{n \in \mathbb{N}}$ are monotonically increasing to infinity.

Therefore, the measure of intervals where $\Phi$ is equal to $E^2$ or equal to $2E^2$ is nonvanishing.
It is possible to prove that the provided sequences $\{\lambda'_n\}_{n\in\N}$ and $\{\lambda_n''\}_{n\in\N}$ are two geometrical progressions. Indeed
\[\lambda_n'=c_1L\lambda_n'',\ \lambda_{n+1}''=c_2L \lambda_n',\ \lambda_{n+1}'=C^2L^2\lambda_n',\ \lambda_{n+1}''=C^2L^2\lambda_n'',
\]
where $c_1=2+\sqrt 2$, $c_2=1+\frac{\sqrt 2}{2}$ and $C$ is the geometrical average of $c_1$ and $c_2$, that is $C^2
=\left(3+2\sqrt 2\right)$.

Finally, we set $\Psi(E)=\Phi(E)+E^2$. $\Psi$ is a strictly convex function.
\end{proof}
The Dirichlet energy density defined as $Q_B(x,E)=\Psi(E)$, satisfies all the assumptions except (A4). This energy density is the basis to build a counterexample proving that (A4) is sharp. Specifically, we consider a 2D case ($n=2$) and $p=2$ in the outer region. The  {asymptotic} growth exponent $q$ satisfies condition $1<q<\infty$.

Let $r$ be greater than or equal to 10, and let the outer region $B$ be the annulus centred in the origin with radii $1$ and $r$. This annulus is $D_r\setminus\overline D_1$,  {where $D_r$ and $D_1$ are the disks of radii $r$ and $1$, respectively,} and centered at the origin. The inner region is, therefore, $D_1$. We focus on problem \eqref{G_norm}, where the Dirichlet energy density is defined as
\[
\begin{split}
Q_B(x,E)&=\Psi(E)\quad\text{in}\ D_r\setminus\overline D_1\times [0,+\infty[,\qquad\\ Q_A(x,E)&=E^q\qquad\text{in}\ D_1\times [0,+\infty[.
\end{split}
\]
Let $\gamma$ be defined as $\gamma=7+\frac {12}{r^2}$. We denote $x=(x_1,x_2) \in \mathbb{R}^2$ and we consider the problem
\begin{equation}
\label{G^tc1large}
\min_{\substack{v\in W^{1,q}(D_r)\\ v=\gamma x_1\ \text{on}\ \partial D_r}}\mathbb G^\lambda(v),\quad \mathbb G^\lambda(v)=\frac 1 {\lambda^2}\left(\int_{ D_r\setminus D_1} \Psi(\lambda|\nabla v(x)|)dx+\int_{D_1} \lambda^q|\nabla v(x)|^q dx\right).
\end{equation}

Here we prove that $\lim_{\lambda\to +\infty}\mathbb G^\lambda(v^\lambda)$ does not exist. Specifically, the two sequences $\{\lambda_n'\}_{n\in\N}\uparrow +\infty$ and $\{\lambda_n''\}_{n\in\N}\uparrow+\infty$ of Lemma \ref{succ_Llarge} give 
\begin{equation}
   \label{counter_resultlarge}
\limsup_{n\to +\infty}\mathbb{G}^{\lambda_n'}(v^{\lambda_n'})\le\ell_1<\ell_2\le\liminf_{n\to +\infty}\mathbb{G}^{\lambda_n''}(v^{\lambda_n''}).
\end{equation}
As usual, $v^\lambda$ is the solution of \eqref{G^tc1large}.

Let us consider the following problem
\begin{align}
\label{problem_down}
&\min_{\substack{v\in H^{1}(D_r\setminus\overline D_1)\\ v=\gamma x_1\ \text{on}\ \partial D_r\\v=const.\ \text{on}\ \partial D_1}}\mathbb C (v), \quad \mathbb C(v)=\int_{ D_r\setminus D_1} |\nabla v(x)|^2dx.
\end{align}
The symmetry of the domain and the zero average of the boundary data imply that the constant appearing in \eqref{problem_down} on $\partial D_1$ is zero.

An easy computation reveals that
\[
v_{D_r}(x)=\frac{7r^2+12}{r^2-1}\left(1-\frac{1}{x_1^2+x_2^2}\right)x_1\quad\text{in}\ D_r\setminus\overline D_1
\]
is the solution of 
\eqref{problem_down}, that $\Delta v_{D_r}=0$ in $D_r\setminus\overline D_1$, and that we have
\[
\begin{split}
\frac{7r^2-12}{r^2-1}\left(1-\frac{1}{\rho^2}\right)\le|\nabla v_{D_r}(x)|\le\frac{7r^2+12}{r^2-1} \left(1+\frac 1{\rho^2}\right)\quad\text{on}\ \partial D_{\rho},\ 1<\rho\le r.
\end{split}
\]
Consequently, when $\rho\ge2$, we have
\begin{equation}
    \label{stima_nablaw}
1\le\frac 34 \frac{7r^2-12}{r^2-1}\le|\nabla v_{D_r}(x)|\le\frac 54 \frac{7r^2+12}{r^2-1}\leq 10\quad\text{in}\ D_r\setminus D_2.
\end{equation}

Let $L$ be greater than 10, $\lambda_n'\uparrow +\infty$ and let $\lambda_n''\uparrow +\infty$ be the  two sequences of Lemma \ref{succ_Llarge}.  
It turns out that
\begin{equation}
\label{stime_up_down}
\lambda_n' \le
\lambda_n'|\nabla v_{D_r}(x)|
\le L \lambda'_n\quad\text{in}\ D_r\setminus D_2.
\end{equation}
We have
\begin{equation}
    \label{inf_counter}
\begin{split}
&\limsup_{n\to +\infty}\mathbb G^{\lambda'_n}(v^{\lambda_n'})\leq\limsup_{n\to +\infty} \mathbb G^{\lambda'_n}(v_{D_r})\\
&=\limsup_{n\to +\infty}\frac 1{(\lambda'_n)^2} \int_{D_r\setminus D_2}\Psi(\lambda'_n|\nabla v_{D_r}(x)|)dx+\limsup_{n\to +\infty}\frac 1{(\lambda'_n)^2} \int_{D_2\setminus D_1}\Psi(\lambda'_n|\nabla v_{D_r}(x)|)dx\\
&\leq 2 \int_{D_r\setminus D_2}|\nabla v_{D_r}(x)|^2dx+3 \int_{D_2\setminus D_1}|\nabla v_{D_r}(x)|^2dx,
\end{split}
\end{equation}
where in the first line we used the minimality of $v^{\lambda'_n}$ for $\mathbb G^{\lambda'_n}$ and that $v_{D_r}$ is an admissible function for problem \eqref{G^tc1large},
in the second line we exploited the property that the gradient of $v_{D_r}$ in $D_1$ is vanishing, and in the third line we used \eqref{stime_up_down}. By setting $\ell_1$ equal to \eqref{inf_counter}:
\[
\ell_1:=2 \int_{D_r\setminus D_2}|\nabla v_{D_r}(x)|^2dx+3 \int_{D_2\setminus D_1}|\nabla v_{D_r}(x)|^2dx,\]
we have the leftmost inequality in \eqref{counter_resultlarge}.

To obtain the rightmost inequality in \eqref{counter_resultlarge}, we consider the following problems
\begin{equation}
    \label{AuxF}
\min_{\substack{v\in H^{1}(D_r\setminus\overline D_1)\\ v=\gamma x_1\ \text{on}\ \partial D_r}} \mathbb H^\lambda (v),\quad\mathbb H^\lambda(v)= \frac{1}{\lambda^2}\int_{D_r\setminus D_2}\Psi(\lambda|\nabla v(x)|)dx+2 \int_{D_2\setminus D_1}|\nabla v(x)|^2dx.
\end{equation}
\begin{equation}
    \label{problem_up}
\min_{\substack{v\in H^{1}(D_r\setminus\overline D_1)\\ v=\gamma x_1\ \text{on}\ \partial D_r}}\mathbb D(v),\quad \mathbb D (v) =3\int_{D_r\setminus D_2}|\nabla v(x)|^2dx+2\int_{D_2\setminus D_1}|\nabla v(x)|^2dx.
\end{equation}
The unique solution of \eqref{problem_up} is
\[
w_{D_r}(x)=
\begin{cases}
\left(7+\frac{12}{x_1^2+x_2^2}\right)x_1&\quad\text{in}\ D_r\setminus\overline D_2\\
8\left(1+\frac{1}{x_1^2+x_2^2}\right)x_1 &\quad\text{in}\ D_2\setminus\overline D_1.
\end{cases}
\]
Analogously to \eqref{stima_nablaw}, it can be easily proved that
\[
1\le 4\le \left(7-\frac{12}{\rho^2}\right)\le|\nabla w_{D_r}(x)|\le \left(7+\frac {12}{\rho^2}\right)\le 10< L\quad\text{on}\ \partial D_{\rho},\ 2\le\rho\le r.
\]
and hence we choose $L>10$ such that
\begin{equation}
\label{stime_up}
\lambda_n'' \le
\lambda_n''|\nabla w_{D_r}(x)|
\le L \lambda''_n\quad\text{in}\ D_r\setminus D_2.
\end{equation}
Therefore, we have
\begin{equation}
\label{sup_counter}
\mathbb{G}^{\lambda_n''}(v^{\lambda_n''})\geq\mathbb H^{\lambda''_n}(w_{D_r})=\mathbb D(w_{D_r}),
\end{equation}
where the inequality comes from the definition of $\Psi$. The equality follows from the fact that $\mathbb H^{\lambda''_n}$ coincides with $\mathbb D$ by \eqref{stime_up} and the definition of $\Psi$. We highlight that $w_{D_r}$ is a local minimizer in $W^{1,\infty}(D_r)\cap W^{1,2}(D_r)$, since $\Psi$ does not depend on $x$  (see \cite{cianchi2010global} for details). Finally, $w_{D_r}$ is a global minimizer thanks to the uniqueness of \eqref{AuxF}.

By setting
\[\ell_2(r):=
\mathbb D(w_{D_r}).
\]
we have the rightmost inequality in \eqref{counter_resultlarge} by passing to the limit in \eqref{sup_counter}.

At this stage, it only remains to be proved that $\ell_1(r)<\ell_2(r)$. To this purpose, we notice that:
\[
\begin{split}
 \ell_1(r)&= 2 \int_{D_r\setminus D_2}|\nabla v_{D_r}(x)|^2dx+3 \int_{D_2\setminus D_1}|\nabla v_{D_r}(x)|^2dx\\
\ell_2(r)&= 3\int_{D_r\setminus D_2}|\nabla w_{D_r}(x)|^2dx+2\int_{D_2\setminus D_1}|\nabla w_{D_r}(x)|^2dx.
\end{split}
\]
Condition $\ell_1(r)<\ell_2(r)$ holds for large $r$, by observing that (i) $v_{D_r}$ and $w_{D_r}$ solve the same associated Euler-Lagrange equation on $D_2\setminus\overline D_1$, (ii) $\nabla v_{D_r}(x)$ and $\nabla w_{D_r}(x)$ are bounded functions on the bounded domain $D_2\setminus D_1$ by \eqref{stime_up_down} and \eqref{stime_up}, respectively, and (iii) it turns out that
\[
\begin{split}
&\lim_{r\to+\infty}\frac{
\int_{ D_r\setminus D_2} |\nabla v_{D_r}(x)|^2dx}{\int_{ D_r\setminus D_2} |\nabla w_{D_r}(x)|^2dx}= 1,\\
&\lim_{r\to+\infty}\int_{D_r\setminus D_2}|\nabla v_{D_r}(x)|^2dx=\lim_{r\to+\infty}\int_{D_r\setminus D_2}|\nabla w_{D_r}(x)|^2dx=+\infty.
\end{split}
\]

\section{Conclusions}
\label{Con_sec}
This study is motivated by Inverse Problems in the presence of nonlinear materials, where the treatment of nonlinear constitutive relationships is still at an early stage of development, as clearly stated in \cite{lam2020consistency}. 

We focus on Electrical Resistance Tomography where the aim is to retrieve the electrical conductivity/resistivity of a material by means of stationary (DC) currents. Our main results prove that the original nonlinear problem can be replaced by a proper $p-$Laplace problem, when the prescribed Dirichlet data are \lq\lq large\rq\rq. Specifically, we prove that in the presence of two materials  {with different asymptotic growth}, the scalar potential in the outer region in contact with the boundary where the Dirichlet data is prescribed, can be computed by (i) replacing the interior region with either a Perfect Electric Conductor or a Perfect Electric Insulator and (ii) replacing the original problem (material) in the outer region with a  {weighted}  $p-$Laplace problem. In a certain sense, the presence of the \lq\lq fingerprint\rq\rq\ of a  {weighted}  $p-$Laplace problem can be recognized in an arbitrary nonlinear problem. From the perspective of tomography, this is a significant result because it highlights the central role played by the  {weighted}  $p-$Laplacian in inverse problems with nonlinear materials. For $p=2$, i.e. when the material in the outer region is linear, these results constitute a powerful bridge making it possible to bring all theoretical results, imaging methods and algorithms developed for linear materials into the arena of problems with nonlinear materials. 




The fundamental tool to prove the convergence results are the inequalities appearing in Propositions \ref{fund_ine_prop}. They express the asymptotic behaviour of the Dirichlet Energy for the outer region in terms of a factorized $p-$Laplacian form.

Moreover, we prove that our assumptions are sharp, by means of proper counterexamples. Finally,  {it would be interesting to} provide a numerical example, referring to a superconducting cable, as an application of the theoretical results proved in this paper.


\section*{Acknowledgements}
This work has been partially supported by the MiUR-Progetto Dipartimenti di eccellenza 2018-2022 grant \lq\lq Sistemi distribuiti intelligenti\rq\rq of Dipartimento di Ingegneria Elettrica e dell'Informazione \lq\lq M. Scarano\rq\rq, by the MiSE-FSC 2014-2020 grant \lq\lq SUMMa: Smart Urban Mobility Management\rq\rq\ and by GNAMPA of INdAM.



\bibliographystyle
{ieeetr}
\bibliography{biblioCFPPT}

\end{document}